\begin{document}

\newtheorem{theorem}{Theorem}[section]
\newtheorem{result}[theorem]{Result}
\newtheorem{fact}[theorem]{Fact}
\newtheorem{conjecture}[theorem]{Conjecture}
\newtheorem{lemma}[theorem]{Lemma}
\newtheorem{proposition}[theorem]{Proposition}
\newtheorem{corollary}[theorem]{Corollary}
\newtheorem{facts}[theorem]{Facts}
\newtheorem{props}[theorem]{Properties}
\newtheorem*{thmA}{Theorem A}
\newtheorem{ex}[theorem]{Example}
\theoremstyle{definition}
\newtheorem{definition}[theorem]{Definition}
\newtheorem{remark}[theorem]{Remark}
\newtheorem{example}[theorem]{Example}
\newtheorem*{defna}{Definition}

\newcommand{\notes} {\noindent \textbf{Notes.  }}
\newcommand{\note} {\noindent \textbf{Note.  }}
\newcommand{\defn} {\noindent \textbf{Definition.  }}
\newcommand{\defns} {\noindent \textbf{Definitions.  }}
\newcommand{\x}{{\bf x}}
\newcommand{\z}{{\bf z}}
\newcommand{\B}{{\bf b}}
\newcommand{\V}{{\bf v}}
\newcommand{\T}{\mathbb{T}}
\newcommand{\Z}{\mathbb{Z}}
\newcommand{\Hp}{\mathbb{H}}
\newcommand{\D}{\mathbb{D}}
\newcommand{\R}{\mathbb{R}}
\newcommand{\N}{\mathbb{N}}
\renewcommand{\B}{\mathbb{B}}
\newcommand{\C}{\mathbb{C}}
\newcommand{\ft}{\widetilde{f}}
\newcommand{\dt}{{\mathrm{det }\;}}
 \newcommand{\adj}{{\mathrm{adj}\;}}
 \newcommand{\0}{{\bf O}}
 \newcommand{\av}{\arrowvert}
 \newcommand{\zbar}{\overline{z}}
 \newcommand{\xbar}{\overline{X}}
 \newcommand{\htt}{\widetilde{h}}
\newcommand{\ty}{\mathcal{T}}
\renewcommand\Re{\operatorname{Re}}
\renewcommand\Im{\operatorname{Im}}
\newcommand{\tr}{\operatorname{Tr}}
\renewcommand{\skew}{\operatorname{skew}}

\newcommand{\ds}{\displaystyle}
\numberwithin{equation}{section}

\renewcommand{\theenumi}{(\roman{enumi})}
\renewcommand{\labelenumi}{\theenumi}

\date{\today}
\title{Quasiconformality and hyperbolic skew}

\author{Colleen Ackermann}
\address{Montgomery College }
\email{colleen.ackermann@montgomerycollege.edu}

\author{Alastair Fletcher}
\address{Department of Mathematical Sciences, Northern Illinois University, DeKalb, IL 60115-2888. USA}
\email{fletcher@math.niu.edu}
\thanks{The second author was supported by a grant from the Simons Foundation, \#352034}

\subjclass[2010]{Primary 30C62; Secondary 30L10}
\keywords{quasiconformal, quasisymmetric, skew, hyperbolic geometry}

\maketitle

\begin{abstract}
We prove that if $f:\B^n \to \B^n$, for $n\geq 2$, is a homeomorphism with bounded skew over all equilateral hyperbolic triangles, then $f$ is in fact quasiconformal. Conversely, we show that if $f:\B^n \to \B^n$ is quasiconformal then $f$ is $\eta$-quasisymmetric in the hyperbolic metric, where $\eta$ depends only on $n$ and $K$. We obtain the same result for hyperbolic $n$-manifolds. Analogous results in $\R^n$, and metric spaces that behave like $\R^n$, are known, but as far as we are aware, these are the first such results in the hyperbolic setting, which is the natural metric to use on $\B^n$.
\end{abstract}

\section{Introduction}

\subsection{Quasiconformal and quasisymmetric maps}

There are various equivalent definitions of quasiconformal mappings in the plane: the analytic definition via Sobolev spaces, the geometric definition involving extremal length of curve families and moduli of quadrilaterals, and the metric definition using linear dilatation. We refer to, for example, \cite{FM, Hubbard} for a fuller discussion on the various characterizations of planar quasiconformal mappings.

A more recent way to define quasiconformal mappings locally was given by Hubbard \cite{Hubbard} using a skew condition on triangles. Given a topological triangle $T$ in $\C$ with vertices $v_1,v_2,v_3$, its skew is defined to be 
\begin{equation}
\label{eq:1} 
\skew (T) = \frac{ \max_{i\neq j} |v_i - v_j| }{\min _{i\neq j} |v_i - v_j|}.
\end{equation}
 Hubbard showed that if in a neighborhood $U$ of a point $z_0$ there is a constant $\sigma$ so that the image of every triangle in $U$ with skew at most $\sqrt{7/3}$ has skew at most $\sigma$, then the map is quasiconformal in $U$. The question of whether the constant $\sqrt{7/3}$ can be reduced to $1$ was also asked in \cite{Hubbard}. After partial progress in \cite{AH}, this question was positively answered in \cite{AHH}, and so quasiconformal mappings may be characterized locally as mappings that distort the skew of equilateral triangles by a bounded amount.

The skew condition is closely related to the three point condition called quasisymmetry. A map $f$ is called quasisymmetric if there is a bijective increasing homeomorphism $\eta : (0,\infty) \to (0,\infty)$ so that for every distinct triple of points $u,v,w$, we have
\[ \left | \frac{f(u)-f(v)}{f(u)-f(w)} \right | \leq \eta \left ( \left | \frac{u-v}{u-w} \right | \right ).\]
In particular, a global quasiconformal map $f:\C \to \C$ is known to be quasisymmetric, which in turn implies the skew condition above with $\sigma = \eta(1)$. On the other hand, even a conformal map that is not global may not be quasisymmetric. In \cite[p.135]{Hubbard}, it was shown that a conformal map from the unit disk to a slit disk is not quasisymmetric and fails the skew condition. Moreover, the family of conformal self-maps of the unit disk is not uniformly quasisymmetric, that is, there is no function $\eta$ that works simultaneously for all functions in the family. To see this, one can verify that if
\[ A_r(z) = \frac{z+r}{1+rz}, \quad r\in(0,1),\]
and $u=0, v=-r, w=r$ then 
\[ \left | \frac{u-v}{u-w} \right | = 1\]
but 
\[  \left | \frac{A_r(u)-A_r(v)}{A_r(u)-A_r(w)} \right | = \frac{1+r^2}{1-r^2},\]
which diverges as $r\to 1$.

In this paper we show that there is a global characterization of quasiconformal mappings in hyperbolic space in dimension at least two in terms of a skew condition on equilateral triangles in the hyperbolic metric. This has the immediate advantage of making the family of conformal self-maps of the unit disk uniformly quasisymmetric with $\eta(t)=t$. The slit disk example mentioned above will then no longer be an issue since we will be using the hyperbolic metric on the slit disk, instead of the Euclidean metric.

\subsection{Statement of results}

We will start by stating our results in the unit ball $\B^n$ in $\R^n$, for $n\geq 2$, equipped with the hyperbolic metric $\rho$.  Given a topological triangle $T \subset \B^n$ with vertices $v_1,v_2,v_3$, its hyperbolic skew is
\[ \skew_{\rho}(T) = \frac {L(T)} {\ell (T)} , \]
where
\[ L(T) = \max_{i\neq j} \rho(v_i, v_j), \quad \ell ( T) = \min _{i\neq j} \rho(v_i, v_j) .\]
An equilateral hyperbolic triangle $T$ has $\skew_{\rho}(T) = 1$. 

\begin{definition}
\label{def:1}
Let $n\geq 2$ and $\sigma \geq 1$. Then the family $\mathcal{F}_{\sigma}$ consists of homeomorphisms $f:\B^n \to \B^n$ so that $\skew_{\rho}(f(T)) \leq \sigma$ for every equilateral hyperbolic triangle $T \subset \B^n$.
\end{definition}

\begin{theorem}
\label{thm:1}
Let $n\geq 2$ and suppose that $f\in \mathcal{F}_{\sigma}$. Then $f$ is quasiconformal.
\end{theorem}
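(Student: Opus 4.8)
The plan is to establish quasiconformality through the standard metric characterization: a homeomorphism between domains in $\R^n$, $n\geq 2$, whose linear dilatation $H_f$ is bounded above everywhere by some finite $H$ is quasiconformal, with dilatation constant depending only on $n$ and $H$. Since the hyperbolic metric $\rho$ is conformal to the Euclidean metric with a smooth positive density, the linear dilatation of $f$ measured in $\rho$ agrees pointwise with the Euclidean one, so it suffices to bound
\[ H_f^{\rho}(x_0,r)=\frac{\sup\{\rho(f(x_0),f(y)):\rho(x_0,y)=r\}}{\inf\{\rho(f(x_0),f(y)):\rho(x_0,y)=r\}} \]
uniformly over all $x_0\in\B^n$ and all small $r$. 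Möbius transformations of $\B^n$ are hyperbolic isometries, hence carry equilateral hyperbolic triangles to equilateral hyperbolic triangles and preserve $\skew_{\rho}$, so $\mathcal{F}_{\sigma}$ is closed under pre-composition with them. Conjugating by a Möbius map sending $0$ to $x_0$, we may therefore assume $x_0=0$, and then the hyperbolic sphere $S_{\rho}(0,r)=\{y:\rho(0,y)=r\}$ is a round Euclidean sphere centered at the origin.

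The core of the argument is a chaining estimate on this sphere. Fix small $r$, let $g(y)=\rho(f(0),f(y))$ on the compact set $S_{\rho}(0,r)$, and pick $p,q\in S_{\rho}(0,r)$ realizing the maximum and minimum of $g$. If $z,z'\in S_{\rho}(0,r)$ satisfy $\rho(z,z')=r$, then $\{0,z,z'\}$ is an equilateral hyperbolic triangle of side $r$, so $\skew_{\rho}(f(\{0,z,z'\}))\leq\sigma$ and in particular $g(z)/g(z')\in[\sigma^{-1},\sigma]$. Hence, if $p$ and $q$ can be joined by a chain $p=z_0,z_1,\dots,z_m=q$ in $S_{\rho}(0,r)$ with $\rho(z_i,z_{i+1})=r$ for every $i$, multiplying consecutive ratios gives $H_f^{\rho}(0,r)=g(p)/g(q)\leq\sigma^m$. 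Because rotations about the origin are Möbius, two points of $S_{\rho}(0,r)$ lie at $\rho$-distance $r$ exactly when they subtend a fixed Euclidean angle $\theta_r$ at the origin, and an elementary computation with the formula for $\cosh\rho$ shows $\theta_r\to\pi/3$ as $r\to 0$. For $n\geq 3$ one can then join any two points of $S_{\rho}(0,r)$ by such a chain of length $m\leq N(n)$, uniformly over all small $r$: step along a geodesic of the round sphere from $p$ towards $q$ in hops of angular size $\theta_r$ until a residual angular gap smaller than $\theta_r$ remains, then close that gap in two more hops, using that for $n\geq 3$ the two $(n-2)$-spheres of points at angle $\theta_r$ from the last point and from $q$ meet (the requisite spherical triangle inequalities hold since $\theta_r$ is near $\pi/3$). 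This yields $H_f^{\rho}(0,r)\leq\sigma^{N(n)}$ for all small $r$, hence $H_f^{\rho}(x_0)\leq\sigma^{N(n)}$ for every $x_0$, and quasiconformality follows from the metric criterion.

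The main obstacle is the planar case $n=2$: on the circle $S_{\rho}(0,r)=S^1$ there is no spare direction to close a residual gap, and the points reachable from $p$ by exact $\theta_r$-hops form a finite set when $\theta_r/\pi$ is rational and a dense but never exhausting set otherwise, so no bounded chain joins a generic pair $p,q$, and continuity of $g$ does not rescue the estimate because the number of hops needed to approximate $q$ blows up. I would handle $n=2$ separately, exploiting that $\mathcal{F}_{\sigma}$ controls equilateral hyperbolic triangles of \emph{every} size: by subdividing a given triangle into small equilateral hyperbolic triangles and propagating the skew bound along the subdivision, one upgrades the hypothesis to a bound on $\skew_{\rho}(f(T))$ for all sufficiently small hyperbolic triangles $T$ with $\skew_{\rho}(T)\leq\sqrt{7/3}$; since $\rho$ is, on small scales, bi-Lipschitz to the Euclidean metric with constant tending to $1$, this passes to the local Euclidean skew condition of \cite{Hubbard, AHH}, from which planar quasiconformality of $f$ follows. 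Carrying out this subdivision while keeping the skew constants controlled is the delicate step; by comparison the higher-dimensional chaining above is routine.
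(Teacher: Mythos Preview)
Your treatment of $n\geq 3$ is correct and is essentially the paper's argument: both bound $H_f^{\rho}(0,r)$ by chaining equilateral hyperbolic triangles with apex $0$ along the sphere $S_{\rho}(0,r)$, using that the loci of third vertices are $(n-2)$-spheres which can be made to intersect. The paper organizes the chain slightly differently (starting from the point $x_1$ realizing $L_\rho$ and stepping toward $x_0$, obtaining at most four triangles) but the idea is the same.

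For $n=2$ your diagnosis of the obstacle is exactly right, but your proposed cure has a real gap. The step ``subdivide a given triangle into small equilateral hyperbolic triangles and propagate the skew bound'' does not work in hyperbolic geometry: there are no similarities, so the medial triangle of an equilateral hyperbolic triangle is equilateral but the three corner pieces are merely isosceles, and iterating does not produce equilateral pieces. More generally, no exact tiling of a hyperbolic triangle by smaller equilateral hyperbolic triangles exists, because the interior angles depend on the side length. Hence the Euclidean mechanism in \cite{Hubbard,AH,AHH}---which depends precisely on this self-similar subdivision---does not transfer, and you cannot upgrade the hypothesis from ``equilateral hyperbolic'' to ``hyperbolic with $\skew_\rho\le\sqrt{7/3}$'' by this route. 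The paper explicitly flags this issue in the introduction and does \emph{not} reduce to the Euclidean result.

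What the paper does instead for $n=2$ is a direct hyperbolic argument (its Steps~1--4). The key device is a one-parameter family of ``rotation'' maps $R_w:\D\to\D$, locally quasiconformal, with the property that $w,z,R_w(z)$ are always the vertices of an equilateral hyperbolic triangle (Lemma~\ref{lem:map}). Given an equilateral $T$ and a point $p$ near its centroid, one (i) chains small equilateral triangles from a longest-image side of $T$ to $p$ (Lemma~\ref{lem:tri2}) to get a point $q$ near $p$ with $\rho(f(p),f(q))\gtrsim L(f(T))$, (ii) uses $R_p$ and $R_a$ applied to the preimage of a shortest segment from $f(p)$ to $\partial f(T)$ to manufacture an equilateral triangle with vertices $q,t_1,t_2$ where $t_1,t_2$ lie on that preimage curve, and (iii) combines these via the skew hypothesis to show $f(T)$ contains a hyperbolic disk about $f(p)$ of radius $\gtrsim L(f(T))$. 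Applying this to two overlapping equilateral triangles centered appropriately bounds $H_f^\rho(0,r)$. This replaces the missing subdivision by a genuinely new construction; your sketch would need something of this kind to close.
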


It turns out that while equilateral hyperbolic triangles of small side length are close to equilateral Euclidean triangles, it is not straightforward to immediately apply the results of \cite{AHH} to this case. The point here is that equilateral Euclidean triangles are not equilateral hyperbolic triangles and so our hypothesis that $f\in \mathcal{F}_{\sigma}$ says nothing a priori about the boundedness of the skew of the images of equilateral Euclidean triangles. The methods employed in the proof of Theorem \ref{thm:1} are analogous to those in \cite{AHH}, but modifications to the hyperbolic setting are necessary and, in fact, we are able to substantially weaken some of the geometric requirements.

For the converse, we will prove the following.

\begin{theorem}
\label{thm:2}
Let $n\geq 2$ and suppose that $f:\B^n \to \B^n$ is $K$-quasiconformal. Then $f$ is $\eta$-quasisymmetric in the hyperbolic metric with $\eta$ depending only on $n$ and $K$.
\end{theorem}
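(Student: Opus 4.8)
The plan is to use the transitivity of the group of Möbius transformations of $\B^n$ to normalize, and then to combine the quasiconformal Schwarz lemma (which controls $f$ at large hyperbolic scales) with the local quasisymmetry of quasiconformal maps (which controls $f$ at small scales). Let $x,a,b\in\B^n$ be distinct, and put $u=\rho(x,a)$, $v=\rho(x,b)$, $t=u/v$; the goal is to bound $\rho(f(x),f(a))/\rho(f(x),f(b))$ by a function of $t$ depending only on $n$ and $K$. Since every Möbius transformation of $\B^n$ is an isometry of $(\B^n,\rho)$ and is conformal, precomposing $f$ with one taking $x$ to $0$ and postcomposing with one taking $f(x)$ to $0$ replaces $f$ by a $K$-quasiconformal self-map of $\B^n$ fixing the origin, without altering the ratios above. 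So we may assume $x=0$ and $f(0)=0$, and it remains to estimate $\rho(0,f(a))/\rho(0,f(b))$ against $t=\rho(0,a)/\rho(0,b)$, where $\rho(0,z)=\tau(|z|)$ with $\tau(r)=\log\frac{1+r}{1-r}$ an increasing homeomorphism of $[0,1)$ onto $[0,\infty)$.

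Next I would assemble two ingredients. (i) The quasiconformal Schwarz lemma yields an increasing homeomorphism $\varphi_{n,K}$ of $[0,1]$ with $|g(z)|\le\varphi_{n,K}(|z|)$ for every $K$-quasiconformal self-map $g$ of $\B^n$ fixing $0$; applying it to $f$ and to $f^{-1}$ (also $K$-quasiconformal) and conjugating by $\tau$ produces increasing self-homeomorphisms $\psi_1\le\psi_2$ of $[0,\infty)$, depending only on $n$ and $K$, with $\psi_1(\rho(0,z))\le\rho(0,f(z))\le\psi_2(\rho(0,z))$ for all $z$. The standard asymptotics of $\varphi_{n,K}$ then give constants $C\ge1$ and $\beta=\beta_{n,K}\in(0,1]$ such that $C^{-1}s\le\psi_1(s)$ and $\psi_2(s)\le Cs$ for $s\ge1$, while $C^{-1}s^{1/\beta}\le\psi_1(s)$ and $\psi_2(s)\le Cs^{\beta}$ for $0\le s\le1$. (ii) The hyperbolic ball $B_\rho(0,\tfrac12)$ is a Euclidean ball about $0$ whose closure, and a fixed dilate of it, lie in $\B^n$; on it $\rho$ and $|\cdot|$ are bi-Lipschitz equivalent with absolute constants, and by (i) its $f$-image lies in the fixed compact set $B_\rho(0,\psi_2(\tfrac12))\subset\B^n$. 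Since quasiconformal maps are locally quasisymmetric with distortion function depending only on $n$ and $K$, it follows that $f|_{B_\rho(0,\tfrac12)}$ is $\eta_1$-quasisymmetric with respect to $\rho$ for some $\eta_1=\eta_{1,n,K}$.

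Finally I would combine these by cases on the sizes of $u$ and $v$. If $u,v\le\tfrac12$ then (ii) gives $\rho(0,f(a))/\rho(0,f(b))\le\eta_1(t)$ at once. Otherwise $\max\{u,v\}>\tfrac12$, and (i) gives $\rho(0,f(a))/\rho(0,f(b))\le\psi_2(u)/\psi_1(v)$; treating the three subcases $u,v>\tfrac12$, $u\le\tfrac12<v$, and $v\le\tfrac12<u$ with the recorded growth of $\psi_1$ and $\psi_2$ shows in each that $\psi_2(u)/\psi_1(v)\le C'\max\{t,t^{\beta},t^{1/\beta}\}$ with $C'=C'_{n,K}$. (For instance, when $u\le\tfrac12<v$ one has $u=tv$ and $v>\tfrac12$, hence $\psi_2(u)/\psi_1(v)\le C^2u^{\beta}/v=C^2t^{\beta}v^{\beta-1}\le 2^{1-\beta}C^2t^{\beta}$.) Taking $\eta$ to be any increasing homeomorphism of $(0,\infty)$ dominating both $\eta_1$ and $t\mapsto C'\max\{t,t^{\beta},t^{1/\beta}\}$ then shows that $f$ is $\eta$-quasisymmetric with $\eta$ depending only on $n$ and $K$.

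The main obstacle is that neither ingredient suffices by itself. The Schwarz lemma controls only the radial, one-point behaviour of $f$, and a naive argument from it alone collapses precisely when $a$ and $b$ both lie at small hyperbolic distance from the base point; local quasisymmetry, on the other hand, is silent about points far from the base point. The substance of the proof is in gluing the two regimes, and the delicate step is checking that the two estimates match across the transitional ranges $\{u\le\tfrac12<v\}$ and $\{v\le\tfrac12<u\}$ without blowing up --- which is exactly where the sharp polynomial-near-$0$ versus linear-near-$\infty$ growth of $\psi_1$ and $\psi_2$ is used.
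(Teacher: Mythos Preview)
Your proposal is correct and follows essentially the same approach as the paper: normalize via M\"obius maps so that the base point and its image are at the origin, invoke local quasisymmetry of quasiconformal maps (the paper's Theorem~\ref{thm:qc2}) on a fixed small hyperbolic ball to handle the case where both points are close to $0$, and use the hyperbolic distortion estimate (the paper's Theorem~\ref{thm:qc1}, which is exactly your Schwarz-lemma ingredient with $\psi_2(s)=C_1\max\{s,s^{1/K}\}$ and $\psi_1(s)=C_2\min\{s,s^{K}\}$) for the remaining cases. The only differences are cosmetic: the paper uses threshold $1$ rather than $1/2$, and organizes the ``mixed'' cases by whether $t\ge1$ or $t\le1$ rather than by which of $u,v$ exceeds the threshold; your sample computation in the case $u\le\tfrac12<v$ tacitly uses $\psi_1(v)\gtrsim v$ on $(\tfrac12,1]$, which needs a harmless extra constant since only $\psi_1(v)\gtrsim v^{1/\beta}$ is stated there, but the conclusion stands.
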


We will see that we can in fact take $\eta(t) = C\max \{ t^K, t^{1/K} \}$, which means that $f$ is power-quasisymmetric in the hyperbolic metric. This term was introduced by Trotsenko and V\"ais\"al\"a \cite{TV}.

This result is likely known by experts in the field, but we were unable to find a reference and so we include a proof here. It is well known that this result is true for quasiconformal mappings in $\R^n$, $n\geq 2$, and there has been a substantial amount of research into generalizing this to other metric spaces that are, in a sense, analogous to Euclidean spaces. 
Heinonen and Koskela \cite[Corollary 4.8 and Theorem 4.9]{HK} proved that if $X$ and $Y$ are Ahlfors $Q$-regular metric spaces, $X$ is a Loewner space, $Y$ is locally linearly connected and $f:X\to Y$ is a quasiconformal map (in the metric sense) which maps bounded sets to bounded sets, then $f$ is quasisymmetric with $\eta$ depending only on the quasiconformality constant of $f$ and the data associated to the spaces $X$ and $Y$.

We refer to \cite{HK} for the various definitions in the above statement, except to point out that  a metric space $X$ is Ahlfors $Q$-regular means that there exists a constant $C\geq 1$ so that for all balls $B(x,r)\subset X$
\[ \frac{r^Q}{C} \leq \mathcal{H}_Q( B(x,r)) \leq Cr^Q,\]
where $\mathcal{H}_Q$ denotes the $Q$-Hausdorff measure in the underlying metric space. This means that in a $Q$-regular metric space, the size of balls of radius $r$ is comparable to $r^Q$. However, in hyperbolic space this is not true: the size of balls grow exponentially with the radius and consequently the arguments of \cite{HK} do not apply in the context of interest to this paper.
It would be interesting to see to what extent the results here can be generalized to quasiconformal mappings in spaces analogous to the ball equipped with the hyperbolic metric, for example domains in $\R^n$ equipped with the quasi-hyperbolic metric.

If $M^n$ is a hyperbolic $n$-manifold, for $n\geq 2$, then by definition there is a covering map $\pi_M :\B^n \to M^n$ and an associated group of covering transformations $G_M$ acting properly discontinuously on $\B^n$, so that $M^n$ can be realized as $\B^n / G_M$. See \cite[p.348]{Ratcliffe} applied to the unit ball model of hyperbolic space. Then the hyperbolic distance $\rho_M$ can be defined via the hyperbolic distance $\rho$ on $\B^n$ and the formula
\[ \rho_M(p,q) = \inf_{\pi_M(x) = p, \pi_M(y) = q} \rho(x,y).\]
If $x$ is considered fixed with $\pi_M(x) = p$, then by the discreteness of $G_M$ we also have
\[ \rho_M(p,q) = \min_{\pi_M(y) = q} \rho(x,y).\]
We then obtain the following corollaries to Theorem \ref{thm:2}.

\begin{corollary}\label{cor:1}
Let $n\geq 2$ and let $M^n,N^n$ be hyperbolic $n$-manifolds carrying hyperbolic distance functions $\rho_M,\rho_N$ respectively. Then a homeomorphism $f:M\to N$ is $K$-quasiconformal if and only if it is $\eta$-quasisymmetric with respect to $\rho_M$ and $\rho_N$, where $\eta$ depends only on $K$ and $n$.
\end{corollary}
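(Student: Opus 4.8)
The plan is to reduce Corollary~\ref{cor:1} to Theorem~\ref{thm:2} by passing to universal covers. Since $\B^n$ is simply connected, the composition $f:\B^n\to N^n$ defined by $f\circ\pi_M$ lifts through the covering $\pi_N$ to a map $\tilde f:\B^n\to\B^n$ with $\pi_N\circ\tilde f=f\circ\pi_M$; performing the same construction for $f^{-1}$ and composing the lifts shows that $\tilde f$ is a homeomorphism. First I would note that quasiconformality is a local property and that $\pi_M$ and $\pi_N$ are local isometries for the hyperbolic metrics, so $f$ is $K$-quasiconformal if and only if $\tilde f$ is. For the direction in which $f$ is assumed $\eta$-quasisymmetric, I would observe that then the linear dilatation of $f$ is at most $\eta(1)$ at every point; reading this in a hyperbolic chart, where $\pi_M$ and $\pi_N$ restrict to isometries onto metric balls, and invoking the equivalence of the metric and analytic definitions of quasiconformality in $\B^n$, one gets that $f$ is $K$-quasiconformal with $K$ depending only on $n$ and $\eta$. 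The remaining task is thus: assuming $\tilde f$ is $K$-quasiconformal, prove that $f$ is $\eta$-quasisymmetric with $\eta$ depending only on $n$ and $K$.

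Here I would apply Theorem~\ref{thm:2} to obtain that $\tilde f$ is $\eta$-quasisymmetric on $(\B^n,\rho)$ with $\eta(t)=C\max\{t^{K},t^{1/K}\}$ depending only on $n$ and $K$, and then transfer this to the quotients by selecting lifts carefully. Fix distinct points $p,q,r\in M^n$ and a lift $\hat p\in\B^n$ of $p$. Choose the lift $\hat q$ of $q$ realizing $\rho(\hat p,\hat q)=\rho_M(p,q)$, which exists by the formula for $\rho_M$ recorded above. Since $\tilde f$ is a homeomorphism with $\pi_N\circ\tilde f=f\circ\pi_M$, it carries the fibre $\pi_M^{-1}(r)$ bijectively onto $\pi_N^{-1}(f(r))$, so I may choose the lift $\hat r$ of $r$ for which $\tilde f(\hat r)$ is the lift of $f(r)$ nearest $\tilde f(\hat p)$, so that $\rho(\tilde f(\hat p),\tilde f(\hat r))=\rho_N(f(p),f(r))$.

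With these choices, $\tilde f(\hat p)$ and $\tilde f(\hat q)$ are lifts of $f(p)$ and $f(q)$, whence $\rho_N(f(p),f(q))\leq\rho(\tilde f(\hat p),\tilde f(\hat q))$, and applying the quasisymmetry of $\tilde f$ to the triple $\hat p,\hat q,\hat r$ gives
\[ \rho_N(f(p),f(q))\;\leq\;\rho\big(\tilde f(\hat p),\tilde f(\hat q)\big)\;\leq\;\eta\!\left(\frac{\rho(\hat p,\hat q)}{\rho(\hat p,\hat r)}\right)\rho\big(\tilde f(\hat p),\tilde f(\hat r)\big)\;=\;\eta\!\left(\frac{\rho_M(p,q)}{\rho(\hat p,\hat r)}\right)\rho_N(f(p),f(r)). \]
Since $\hat r$ is a lift of $r$ we have $\rho(\hat p,\hat r)\geq\rho_M(p,r)$, and as $\eta$ is increasing the right-hand side is at most $\eta\!\left(\rho_M(p,q)/\rho_M(p,r)\right)\rho_N(f(p),f(r))$. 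Hence $f$ is $\eta$-quasisymmetric with the same $\eta$, and Corollary~\ref{cor:1} follows. The step I expect to require the most care is precisely this asymmetric selection of lifts: an \emph{upstream-optimal} lift $\hat q$ of $q$ together with a \emph{downstream-optimal} lift $\hat r$ of $r$. Choosing both lifts on the same side would not work, because a point of $M^n$ may have lifts arbitrarily far apart in $\B^n$, so that one loses all control relating $\rho(\hat p,\hat r)$ to $\rho_N(f(p),f(r))$.
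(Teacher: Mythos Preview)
Your argument is correct and follows the same overall strategy as the paper---lift to the universal cover and apply Theorem~\ref{thm:2}---but your handling of the descent step is different and in fact sharper. The paper chooses \emph{both} lifts $\hat q$ and $\hat r$ to be upstream-optimal (i.e.\ realizing the $\rho_M$-distances to $\hat p$); this gives the numerator immediately but leaves no direct control on $\rho_N(f(p),f(r))$ in terms of $\rho(\tilde f(\hat p),\tilde f(\hat r))$, so the paper applies the quasisymmetry of $\tilde f$ a second time over the $G_M$-orbit of $\hat r$ to obtain $\rho_N(f(p),f(r))\geq \rho(\tilde f(\hat p),\tilde f(\hat r))/\tilde\eta(1)$, ending with the quasisymmetry function $\tilde\eta(1)\,\tilde\eta(t)$. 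Your asymmetric choice---upstream-optimal $\hat q$ together with a downstream-optimal $\hat r$, available because $\tilde f$ bijects the fibre $\pi_M^{-1}(r)$ onto $\pi_N^{-1}(f(r))$---dispenses with that second application and yields quasisymmetry of $f$ with the \emph{same} function $\eta$ as for $\tilde f$. Both arguments are short, but yours avoids the extra multiplicative constant $\tilde\eta(1)$; the paper's version has the minor advantage that it never needs to verify the fibre-bijection property of the lift.
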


\begin{corollary}\label{cor:2}
Let $n\geq 2$ and let $M^n$ and $N^n$ be hyperbolic $n$-manifolds. Then a homeomorphism $f:M^n \to N^n$ is quasiconformal if and only if there is a constant $\sigma \geq 1$ such that for all equilateral hyperbolic triangles $T$ in $M^n$, we have $\skew _{\rho_N}(f(T)) \leq \sigma$.
Similarly to before we define
 \[ \skew_{\rho_N} (f (T)) =\frac{max_{i\neq j}\rho_N (v_i, v_j)} {min_{i\neq j}\rho_N (v_i, v_j)}\]
where $v_1$, $v_2$ and $v_3$ are the vertices of the topological triangle $f (T) $.
\end{corollary}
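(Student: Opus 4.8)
The plan is to prove the two directions of the equivalence separately. The direction ``quasiconformal implies bounded skew'' is essentially immediate from Corollary \ref{cor:1}: if $f:M^n\to N^n$ is $K$-quasiconformal, then it is $\eta$-quasisymmetric with respect to $\rho_M$ and $\rho_N$ for some $\eta$ depending only on $n$ and $K$. Given an equilateral hyperbolic triangle $T\subset M^n$ with vertices $v_1,v_2,v_3$, the quantity $\rho_M(v_i,v_j)$ is independent of the pair $i\neq j$, so feeding each of the six ordered triples drawn from $v_1,v_2,v_3$ into the quasisymmetry inequality and using that the relevant ratio of $\rho_M$-distances equals $1$ would give $\rho_N(f(v_i),f(v_j))/\rho_N(f(v_i),f(v_k))\le\eta(1)$ for all distinct $i,j,k$, hence $\skew_{\rho_N}(f(T))\le\eta(1)$. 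So the skew bound holds with $\sigma=\eta(1)$.

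For the converse I would reduce to Theorem \ref{thm:1}. Assuming the skew bound holds for some $\sigma\ge1$, first lift $f$ through the universal covering maps: since $\B^n$ is simply connected there is a continuous $\tilde f:\B^n\to\B^n$ with $\pi_N\circ\tilde f=f\circ\pi_M$, and lifting $f^{-1}$ as well and noting that the two composites are covering transformations shows $\tilde f$ is a homeomorphism. Next I would transfer the skew hypothesis to $\tilde f$ locally. Fix $\tilde x\in\B^n$, set $p=\pi_M(\tilde x)$ and $q=\pi_N(\tilde f(\tilde x))=f(p)$, pick $r_1,r_2>0$ smaller than the injectivity radii of $M^n$ at $p$ and of $N^n$ at $q$, so that $\pi_M$ is an isometry on $B_\rho(\tilde x,r_1)$ and $\pi_N$ on $B_\rho(\tilde f(\tilde x),r_2)$, and then by continuity of $\tilde f$ choose $\delta\in(0,r_1)$ with $\tilde f(B_\rho(\tilde x,\delta))\subset B_\rho(\tilde f(\tilde x),r_2)$. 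For any equilateral hyperbolic triangle $T$ with vertices in $B_\rho(\tilde x,\delta)$ we have $T\subset B_\rho(\tilde x,\delta)$ by geodesic convexity, $\pi_M(T)$ is an equilateral hyperbolic triangle in $M^n$ of the same side length, and, because the vertices of $\tilde f(T)$ sit in a region where $\pi_N$ is an isometry, $\skew_{\rho}(\tilde f(T))=\skew_{\rho_N}(f(\pi_M(T)))\le\sigma$. Thus $\tilde f$ distorts the skew of every sufficiently small equilateral hyperbolic triangle (the smallness depending on the base point) by a factor at most $\sigma$. Since quasiconformality and the linear dilatation used to detect it are local, the proof of Theorem \ref{thm:1} bounds the dilatation at each point using only arbitrarily small equilateral hyperbolic triangles near that point, so the same argument applies to $\tilde f$ and yields that $\tilde f$ is quasiconformal with constant depending only on $n$ and $\sigma$. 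Finally, near any point of $M^n$ one has $f=\pi_N\circ\tilde f\circ\pi_M^{-1}$ with $\pi_M,\pi_N$ local isometries, so $f$ is quasiconformal. (Equivalently, one could avoid lifts and work directly on $M^n$, conjugating $f$ by isometric charts at $p$ and $f(p)$ into a homeomorphism between hyperbolic balls satisfying the skew bound on all equilateral hyperbolic triangles inside its domain.)

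The hard part is the mismatch between the hypothesis of Theorem \ref{thm:1}, which asks for the skew bound on \emph{all} equilateral hyperbolic triangles, and what the skew bound on $M^n$ provides after lifting. One cannot simply check $\tilde f\in\mathcal{F}_\sigma$, because an equilateral hyperbolic triangle in $\B^n$ of large side length need not project to an equilateral hyperbolic triangle in $M^n$: in general only $\rho_M(\pi_M(v_i),\pi_M(v_j))\le\rho(v_i,v_j)$ holds, with strict inequality possible once the side length exceeds the injectivity radius, and on a hyperbolic manifold the injectivity radius has no positive lower bound (it tends to $0$ along a cusp, for instance). So the argument must use that the hypothesis controls the skew of $\tilde f$ only on triangles confined to a base-point-dependent neighbourhood, together with the fact that this localized information is exactly what the proof of Theorem \ref{thm:1} consumes; establishing and exploiting that local form of Theorem \ref{thm:1} is the step I expect to require the most care. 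Granting it, both implications of Corollary \ref{cor:2} follow.
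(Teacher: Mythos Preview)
Your proposal is correct and follows essentially the same route as the paper: the forward direction invokes Corollary~\ref{cor:1} to obtain $\sigma=\eta(1)$, and the converse lifts $f$ to $\widetilde f:\B^n\to\B^n$, observes that the skew hypothesis transfers to $\widetilde f$ on sufficiently small equilateral triangles via the local isometry property of the covering maps, and then appeals to the fact that the proof of Theorem~\ref{thm:1} is local and consumes only small triangles near each point. Your write-up is in fact more explicit than the paper's in two respects: you take care that the image $\widetilde f(B_\rho(\tilde x,\delta))$ lands in a ball on which $\pi_N$ is an isometry (so that $\skew_\rho(\widetilde f(T))=\skew_{\rho_N}(f(\pi_M(T)))$ genuinely holds), and you spell out why $\widetilde f$ is a homeomorphism and why the point-dependent $\delta$ is unavoidable when the injectivity radius is not bounded below.
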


\begin{corollary}\label{cor:3}
Let $n\geq 2$ and let $M^n$ and $N^n$ be hyperbolic $n$-manifolds. Then the family $\mathcal{F}_K$ of $K$-quasiconformal maps from $M^n$ onto $N^n$ is a uniformly quasisymmetric family with respect to the hyperbolic distances on $M^n$ and $N^n$.
\end{corollary}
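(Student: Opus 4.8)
The plan is to deduce this directly from Corollary \ref{cor:1}, which already contains all the substance. First I would recall the relevant definition: a family $\mathcal{G}$ of homeomorphisms between metric spaces is \emph{uniformly quasisymmetric} if there is a single increasing homeomorphism $\eta:(0,\infty)\to(0,\infty)$ such that every $g\in\mathcal{G}$ is $\eta$-quasisymmetric. Thus the whole content of the corollary is that one may choose a distortion function that does not depend on the particular map in the family.

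The key observation is then that the distortion function produced by Corollary \ref{cor:1} depends only on $K$ and $n$, and in particular not on the individual map $f$, nor on the manifolds $M^n$ and $N^n$ themselves. Hence, having fixed $K$ together with the pair $M^n,N^n$, every $f\in\mathcal{F}_K$ is $\eta$-quasisymmetric with respect to $\rho_M$ and $\rho_N$ for the one and the same function $\eta=\eta(K,n)$ supplied by Corollary \ref{cor:1}. This is precisely the assertion that $\mathcal{F}_K$ is a uniformly quasisymmetric family, so the proof consists of little more than citing Corollary \ref{cor:1} and reading off the uniformity of the dependence.

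Accordingly, I do not expect any genuine obstacle here: all of the real work has already been carried out in Theorem \ref{thm:2} and in lifting it to the manifold setting in Corollary \ref{cor:1}, and what remains is bookkeeping about which quantities $\eta$ is allowed to depend on. It may be worth remarking, as a sanity check, that specializing to $M^n=N^n=\B^n$ and $K=1$ recovers the claim made in the introduction that the conformal self-maps of the unit ball form a uniformly quasisymmetric family in the hyperbolic metric, in contrast to the Euclidean situation illustrated there by the maps $A_r$.
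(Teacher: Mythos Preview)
Your proposal is correct and matches the paper's own proof essentially verbatim: the paper simply states that the result is immediate from Corollary \ref{cor:1} since $\eta$ depends only on $K$ and $n$. Your additional remarks (recalling the definition of uniform quasisymmetry and the sanity check with $M^n=N^n=\B^n$, $K=1$) are accurate elaborations but not needed for the argument.
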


Note that the $n=2$ case in the corollaries above applies to hyperbolic Riemann surfaces which, via the Uniformization Theorem, are almost all Riemann surfaces.

The paper is organized as follows. In section 2, we recall some facts about quasiconformal mappings and hyperbolic geometry. In section 3, we prove Theorem \ref{thm:1}. In section 4, we prove Theorem \ref{thm:2} and its corollaries.

The authors would like to thank Vlad Markovic for helpful conversations on the topic of this paper.

\section{Preliminaries}

\subsection{Hyperbolic Geometry}

Let $n\geq 2$ and let $\B^n$ be the unit ball in $\R^n$. We equip $\B^n$ with the hyperbolic density
\begin{equation}\label{eqn:1}
 \lambda (x) \: |dx| = \frac{ 2|dx|}{1-|x|^2}. 
\end{equation} 
The hyperbolic metric on $\B^n$ is defined by
\[ \rho (u,v) = \inf \int_{\gamma} \lambda(x) \: |dx|,\]
where the infimum is taken over all paths in $\B^n$ joining $u$ and $v$. The infimum is achieved for circular arcs which, if extended to $\partial \B^n$, cut through $\partial \B^n$ perpendicularly. We will denote by $B_\rho(x_0,r)$ the open hyperbolic ball of radius $r>0$ centred at $x_0 \in \B^n$. Balls in other metric spaces will use similar notation.

\subsection{In dimension two}

We refer to \cite{BM} for a reference to the theory of hyperbolic geometry in dimension two.
The formula for the hyperbolic metric on the unit disk $\D $ is given by
\[ \rho(z,w) = \log \frac{ 1+ \left | \frac{z-w}{1-\overline{w}z} \right | }{1- \left | \frac{z-w}{1-\overline{w}z} \right | }, \quad z,w\in \D.\]
Isometries of the hyperbolic metric are given precisely by M\"obius transformations which preserve the unit disk.

The hyperbolic metric can be defined on any simply connected proper sub-domain $U$ of $\C$ via a Riemann map $\varphi :U \to \D$. We then define the hyperbolic density on $U$ by
\[ \lambda_U(x) = \lambda_{\D}( \varphi(z)) |\varphi '(z)| \]
where $\lambda_{\D} $ is defined in formula (\ref{eqn:1}), and the hyperbolic metric on $U$ by integrating $\lambda_U$. The hyperbolic metric can be defined on any plane domain and, more generally, any Riemann surface that is not covered by the sphere or plane via the Uniformization Theorem.

An equilateral hyperbolic triangle $T$ has three vertices $v_1,v_2$ and $v_3$ and three edges made by geodesic segments of equal length joining the vertices. The side length $r$ of $T$ determines the interior angles. Applying a M\"obius map to send one of the vertices to $0$ and another to $x>0$, the remaining vertex must be sent to $xe^{i\alpha}$ for some $\alpha$. Since 
\[ r = \rho(0,x) = \rho(x,xe^{i\alpha}),\]
we can compute that
\begin{equation}
\label{eq:alphax} 
\alpha = \cos^{-1} \left ( \frac{1+x^2}{2} \right).
\end{equation}
We can express $\alpha$ in terms of $r$ by using the relationships
\[ r = \log \frac{1+x}{1-x}, \quad x = \frac{e^r -1}{e^r+1}\]
to see that
\begin{equation}
\label{eq:alphar}
\alpha = \cos^{-1} \left ( \frac{1+\tanh^2(r/2)}{2} \right).
\end{equation}
As $r\to 0$, we observe that $\alpha \to \pi/3$ and so small equilateral hyperbolic triangles are close to equilateral Euclidean triangles. Evaluating when $r=1$, we have the following lemma.

\begin{lemma}
\label{lem:tri0}
If an equilateral hyperbolic triangle has side length $0<r\leq 1$, then the internal angles satisfy $2\pi/7 < \theta < \pi/3$.
\end{lemma}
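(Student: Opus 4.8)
The plan is to argue directly from the explicit formula (\ref{eq:alphar}), which expresses the internal angle $\theta$ of an equilateral hyperbolic triangle of side length $r$ as
\[ \theta = \cos^{-1}\left( \frac{1 + \tanh^2(r/2)}{2} \right). \]
The key observation is a monotonicity statement: the map $r \mapsto \tanh^2(r/2)$ is strictly increasing on $(0,\infty)$, since its derivative equals $\tanh(r/2)\operatorname{sech}^2(r/2)$, which is positive for $r>0$; as $\cos^{-1}$ is strictly decreasing on $[-1,1]$, it follows that $\theta$ is a strictly decreasing function of $r$ on $(0,1]$. Hence it suffices to control $\theta$ at the two endpoints of the interval.

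For the upper bound, as $r \to 0^+$ we have $\tanh^2(r/2) \to 0$, so $\theta \to \cos^{-1}(1/2) = \pi/3$; by strict monotonicity, $\theta < \pi/3$ for every $r \in (0,1]$. For the lower bound, the infimum of $\theta$ over $(0,1]$ is attained at $r = 1$, and using $\tanh(1/2) = (e-1)/(e+1)$ (equivalently, $x = \tanh(r/2)$ in (\ref{eq:alphax})) we get
\[ \theta(1) = \cos^{-1}\left( \frac{1}{2} + \frac{1}{2}\left(\frac{e-1}{e+1}\right)^2 \right). \]
Thus the whole lemma reduces to the single numerical inequality
\[ \frac{1}{2} + \frac{1}{2}\left(\frac{e-1}{e+1}\right)^2 < \cos\!\left( \frac{2\pi}{7} \right), \qquad \text{i.e.} \qquad \left(\frac{e-1}{e+1}\right)^2 < 2\cos\!\left(\frac{2\pi}{7}\right) - 1, \]
whose two sides are approximately $0.2136$ and $0.2470$.

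The only step needing genuine care is making this last inequality rigorous rather than merely numerical, and this is the main (modest) obstacle. A clean route: bound $e < 2.7183$, so that $\big((e-1)/(e+1)\big)^2 < (1.7183/3.7183)^2 < 0.214$ (the map $x \mapsto (x-1)/(x+1)$ being increasing and positive for $x>1$); and bound $\cos(2\pi/7)$ from below using that $t = \cos(2\pi/7)$ is the largest root of $8t^3 + 4t^2 - 4t - 1 = 0$ — evaluating this cubic at $t = 0.607$ yields a negative value, and since $0.607$ exceeds the other two (negative) roots $\cos(4\pi/7),\cos(6\pi/7)$ and the leading coefficient is positive, we conclude $\cos(2\pi/7) > 0.607$, whence $2\cos(2\pi/7) - 1 > 0.214$. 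Combining the two estimates gives the strict inequality, so $\theta(1) > 2\pi/7$ and therefore $\theta > 2\pi/7$ on all of $(0,1]$. (Alternatively, one may simply use $2\pi/7 < 0.898$ together with a short Taylor bound for $\cos$ near that point.) This completes the proof.
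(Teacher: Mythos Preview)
Your proof is correct and follows the same approach as the paper, which simply states that the lemma follows by ``evaluating when $r=1$'' in formula~\eqref{eq:alphar}; you supply the monotonicity argument and a rigorous verification of the numerical inequality $\theta(1)>2\pi/7$ that the paper leaves implicit. The extra care you take with the cubic for $\cos(2\pi/7)$ and the bound on $e$ is more than the paper provides, but the underlying idea is identical.
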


The centroid of an equilateral hyperbolic triangle $T$ can be found by applying a M\"obius map $A$ to send the vertices to $t,t\omega$ and $t\omega^2$, where $t>0$ and $\omega = e^{2\pi i /3}$. Then $0$ is the centroid of the resulting triangle, and is the common intersection point of the geodesic segments joining a vertex to the midpoint of the opposite side. Applying $A^{-1}$, we see that $A^{-1}(0)$ is the centroid of $T$.

We call a collection of equilateral hyperbolic triangles $T_1,\ldots,T_m$ of the same side length $r$ in $\D$ a {\it chain} if $T_j$ and $T_{j+1}$ have a common side for $j=1,\ldots, m-1$. We allow the triangles in the chain to overlap.

\subsection{Quasihyperbolic metric}

A metric that is related to the hyperbolic metric, but can be defined on any proper subdomain $U$ of $\R^n$, is the quasihyperbolic metric given by density
\[ \delta_U(x) |dx| = \frac{|dx|}{d(x,\partial U)},\]
where $d(x,\partial U)$ denotes the Euclidean distance from $x$ to the boundary of $U$. 
The quasihyperbolic metric is denoted $q_U$ and obtained by integrating the density $\delta_U$.

The hyperbolic and quasi-hyperbolic metrics are bi-Lipschitz equivalent on simply connected proper subdomains $U$ of $\C$. In fact, it follows from the Koebe $1/4$-Theorem that
\[ \frac{\delta_U(z)}{2} \leq \lambda_U(z) \leq 2\delta_U(z),\]
for all $z\in U$. This is not true in general, considering for example the punctured disk. In dimension three and greater, we can only define the hyperbolic metric on balls and half-spaces. This is a consequence of the generalized Liouville's Theorem, see for example \cite[Theorem I.2.5]{Rickman}, which says that the only $1$-quasiregular mappings in $\mathbb{R} ^ n $ with $n\geq 3 $ are (restrictions of) M\"obius transformations. Consequently, the quasihyperbolic metric plays the role of the hyperbolic metric in function theory in higher dimensions.

\subsection{Quasiconformal mappings}

As remarked at the outset of this paper, there are various equivalent definitions of quasiconformal mappings in $\R^n$, $n\geq 2$. We will give the analytic definition and the metric definition.

\begin{definition}[Analytic Definition]
A {\it quasiconformal} mapping in a domain $U\subset \R^n$ for $n\geq 2$ is a homeomorphism in the Sobolev space $W^1_{n,loc}(U)$ where there is a uniform bound on the distortion, that is, there exists $K\geq 1$ such that
\[|f'(x)|^n \leq KJ_f(x)\]
almost everywhere in $U$. 
The minimum such $K$ for which this inequality holds is called the {\it outer  dilatation} and denoted by $K_O(f)$. As a consequence of this, there is also $K' \geq 1$ such that 
\[J_f(x) \leq K' \inf_{|h|=1}|f'(x)h|^n\]
holds almost everywhere in $U$. The minimum such $K'$ for which this inequality holds is called the {\it inner  dilatation} and denoted by $K_I(f)$. If $K (f)= \max \{K_O(f), K_I(f) \}$, then $K(f)$ is the {\it  maximal dilatation} of $f$. A $K$-quasiconformal mapping is a quasiconformal mapping for which $K(f) \leq K$.
\end{definition}

\begin{definition}[Metric Definition]
\label{def:metric}
Let $n\geq 2$ and let $U \subset \R^n$ be a domain. Then $f:U \to \R^n$ is quasiconformal  if and only if there exists a constant $H$ such that
\[ \limsup_{r\to 0 } H(x,r) \leq H\]
for all $x\in U$, where 
\[ H(x,r) = \frac{ \max_{|x-y| =r} |f(x)-f(y)|}{ \min_{|x-y| = r} |f(x)-f(y)|}.\]
\end{definition}

The Analytic and Metric Definitions are equivalent and, in fact, $H \leq K^{2/n}$ and $K\leq H^{n-1}$, see \cite[p.109]{IM}.

In this paper, we will be interested in a hyperbolic version of linear distortion. We therefore define for $x\in \B^n$ and $r>0$
\[ H_{\rho} (x,r) = \frac{ L_{\rho}(x,r) }{ \ell_{\rho}(x,r) } ,\]
where
\[ L_{\rho}(x,r) = \max_{\rho(x,y) =r } \rho(f(x),f(y)) , \quad \ell_{\rho}(x,r) = \min_{\rho(x,y) =r } \rho(f(x),f(y)).\]
The following result on the distortion of the hyperbolic metric was proved by Gehring and Osgood \cite{GO} for the quasihyperbolic metric with a constant depending on $n$ and $K$, then improved to a dimension independent version by Vuorinen \cite{Vu} (see also \cite[Corollary 12.20]{Vuorinen}). For our purposes with the hyperbolic metric, we just note that the hyperbolic and quasihyperbolic metrics are bi-Lipschitz equivalent on the unit ball.

\begin{theorem}
\label{thm:qc1}
Let $n\geq 2 $ and let $\B^n$ be the unit ball in $\R^n$ equipped with the hyperbolic metric $\rho$. Then if $f:\B^n \to \B^n$ is a $K$-quasiconformal mapping,
\[ \rho(f(x),f(y)) \leq C_1 \max \{ \rho(x,y)^{1/K}, \rho(x,y) \},\]
and
\[ \rho(f(x),f(y)) \geq C_2 \min \{ \rho(x,y)^{K}, \rho(x,y) \},\]
where $C_1,C_2$ are constants that depend only on $K$.
\end{theorem}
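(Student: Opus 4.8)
The plan is to deduce this from the corresponding statement for the quasihyperbolic metric $q = q_{\B^n}$ of the unit ball, which is precisely the Gehring--Osgood theorem (with the dimension-free constants of Vuorinen), and then transfer back using the bi-Lipschitz equivalence of $\rho$ and $q$. First I would record that equivalence explicitly: since $d(x,\partial \B^n) = 1-|x|$, the quasihyperbolic density on $\B^n$ is $\delta_{\B^n}(x) = 1/(1-|x|)$, while the hyperbolic density is $\lambda(x) = 2/(1-|x|^2)$, so that $\delta_{\B^n}(x) \leq \lambda(x) = \tfrac{2}{1+|x|}\,\delta_{\B^n}(x) \leq 2\,\delta_{\B^n}(x)$ for every $x \in \B^n$. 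Integrating along paths then gives $q(u,v) \leq \rho(u,v) \leq 2q(u,v)$ for all $u,v \in \B^n$.

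Next I would invoke the Gehring--Osgood/Vuorinen estimate for $f:\B^n\to\B^n$ regarded as a $K$-quasiconformal self-map of the domain $\B^n$: there is a constant $C = C(K) \geq 1$ with $q(f(x),f(y)) \leq C\max\{q(x,y)^{1/K},q(x,y)\}$ for all $x,y \in \B^n$. (One may quote this with its natural H\"older exponent, which is at least $1/K$, and note that replacing that exponent by $1/K$ only weakens the bound.) Combining with the metric comparison above and the monotonicity of $t\mapsto t^{1/K}$,
\[ \rho(f(x),f(y)) \leq 2q(f(x),f(y)) \leq 2C\max\{q(x,y)^{1/K},q(x,y)\} \leq 2C\max\{\rho(x,y)^{1/K},\rho(x,y)\}, \]
which is the first inequality with $C_1 = 2C$.

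For the lower bound I would apply the upper bound to $f^{-1}$, which is again $K$-quasiconformal since inner and outer dilatations are interchanged under inversion. Writing $s = q(f(x),f(y))$ and $t = q(x,y)$ this gives $t \leq C\max\{s^{1/K},s\}$, and a short case analysis on whether $s \leq 1$ or $s > 1$ (using $K\geq 1$) yields $s \geq C^{-K}\min\{t^{K},t\}$. Transferring back via $\rho/2 \leq q \leq \rho$ gives
\[ \rho(f(x),f(y)) \geq q(f(x),f(y)) \geq C^{-K}\min\{q(x,y)^K,q(x,y)\} \geq (2C)^{-K}\min\{\rho(x,y)^K,\rho(x,y)\}, \]
which is the second inequality with $C_2 = (2C)^{-K}$.

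There is no genuine obstacle here: the substantive content, namely H\"older-type distortion of the quasihyperbolic metric under quasiconformal maps, is the cited theorem of Gehring--Osgood and Vuorinen, and everything else is bookkeeping. The only points requiring a little care are tracking the direction of the inequalities when powers are applied during the passage between $q$ and $\rho$, and the elementary inversion of the upper bound that produces the lower one.
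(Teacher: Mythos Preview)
Your proposal is correct and follows exactly the route the paper indicates: the paper does not give a proof of this theorem but simply cites Gehring--Osgood and Vuorinen for the quasihyperbolic version and remarks that the hyperbolic and quasihyperbolic metrics are bi-Lipschitz equivalent on $\B^n$. You have just made that transfer explicit, including the inversion argument for the lower bound, so there is nothing to add.
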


We will also need the following result which characterizes quasiconformal mappings as local quasisymmetric mappings in a quantitative way. This result is due to V\"ais\"al\"a \cite[Theorem 2.4]{Va} and is slightly reformulated for our purposes (see also \cite[Theorem 11.14]{Heinonen}).

\begin{theorem}
\label{thm:qc2}
Let $n\geq 2$, suppose that $U \subset \R^n$ is open and suppose that $f:U \to \R^n$ is $K$-quasiconformal. Suppose also that $x_0 \in U$, $0<\lambda <1$ and $r>0$ so that $B(x_0,r) \subset U$. Then $f$ restricted to $B(x_0,\lambda r)$ is $\xi$-quasisymmetric, where $\xi$ depends only on $n,K$ and $\lambda$.
\end{theorem}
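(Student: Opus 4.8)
The plan is to obtain this as a reformulation of V\"ais\"al\"a's local quasisymmetry theorem \cite[Theorem 2.4]{Va}; the only point requiring attention is that the quasisymmetry control function $\xi$ can be chosen to depend on no more than $n$, $K$ and $\lambda$. I would organize the argument around the two-sided modulus inequality for quasiconformal maps: for every curve family $\Gamma$ in $U$ one has $\mathrm{mod}(\Gamma)/K \le \mathrm{mod}(f\Gamma) \le K\,\mathrm{mod}(\Gamma)$. In particular $f$ quasi-preserves the moduli of ring (Gr\"otzsch/Teichm\"uller) domains, and for such rings the modulus is, by the classical extremal-ring estimates, a controlled monotone function of the ratio of the distances between, or the diameters of, the complementary components, with all constants depending only on $n$.

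The key geometric input is an ``egg-yolk'' estimate: because $B(x_0,r) \subset U$ and $f$ is a homeomorphism, the image $f(B(x_0,\lambda r))$ is trapped between two concentric Euclidean balls about $f(x_0)$ whose radii are comparable, with ratio bounded in terms of $n$, $K$ and $\lambda$ only. This is exactly where the passage from $B(x_0,r)$ to the strictly smaller ball $B(x_0,\lambda r)$ is used: it rules out the image degenerating near $\partial f(B(x_0,r))$ and yields uniform ``roundness''. With this in hand, I would fix three distinct points $x,a,b \in B(x_0,\lambda r)$ with $|x-a| \le |x-b|$, apply the modulus inequality to a ring domain separating $\{f(x),f(a)\}$ from $\{f(b)\}$ together with the complement of the round image region, and convert the resulting modulus bounds back into an inequality
\[ \frac{|f(x)-f(a)|}{|f(x)-f(b)|} \le \xi\!\left(\frac{|x-a|}{|x-b|}\right), \]
where $\xi$ is the increasing self-homeomorphism of $(0,\infty)$ built from the universal modulus--ratio correspondence and the egg-yolk constant, hence depends only on $n$, $K$ and $\lambda$.

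I expect the egg-yolk/roundness step to be the main obstacle: producing it with dependence only on $n$, $K$ and $\lambda$, rather than on the a priori unknown geometry of $f$ or of $f(B(x_0,r))$, requires Mori-type distortion bounds and some care to verify that the image ball is not squeezed in any direction. The remaining ingredients — the modulus inequality and the extremal-ring estimates — are standard. For a complete treatment we would refer the reader to \cite{Va} and \cite[Theorem 11.14]{Heinonen}.
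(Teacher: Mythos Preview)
Your proposal is fine and aligns with the paper's treatment: the paper does not prove this statement at all but simply quotes it as a known result of V\"ais\"al\"a \cite[Theorem~2.4]{Va}, with a pointer to \cite[Theorem~11.14]{Heinonen}. Your sketch of the underlying argument (modulus inequalities for ring domains plus an egg-yolk roundness estimate) is the standard route to that theorem, so there is nothing to compare beyond noting that you have supplied more detail than the paper chose to include.
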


This result is informally called the egg yolk principle: the smaller ball is the yolk, the larger ball is the egg and, however wildly $f$ behaves near the edge of the egg, it is relatively well-behaved on the yolk.

\section{Hyperbolic Equilateral Triangles}

In this section, we will prove Theorem \ref{thm:1}. In dimension three and higher, the proof is easier and so we will deal with this case first and then move to the dimension two case. Throughout, if $n\geq 2$, denote by $\B^n$ the unit ball in $\R^n$, by $\rho$ the hyperbolic metric on $\B^n$ and by $\mathcal{F}_{\sigma}$, the family of homeomorphisms $f:\B^n \to \B^n$ satisfying the skew condition with constant $\sigma \geq 1$, recalling Definition \ref{def:1}. 

\begin{proof}[Proof of Theorem \ref{thm:1} with $n\geq 3$]
Fix $n\geq 3$. Choose $r_0$ small enough so that an equilateral hyperbolic triangle in $\B^n$ with side length $r\leq r_0$ has interior angles at least $\pi/3 - \delta$ for some small fixed $\delta$. For such an $r$, denote by $S_r$ the boundary of $B_{\rho}(0,r)$ in $\B^n$. 

Suppose $f \in \mathcal{F}_{\sigma}$ and for now assume that $f$ fixes $0$. Since $S_r$ is compact, $L_{\rho}(0,r)$ and $\ell_{\rho}(0,r)$ are achieved on $S_r$ at, say, $x_1$ and $x_0$ respectively.

Consider all equilateral hyperbolic triangles which have two vertices at $0$ and $x_1$. The locus of all possible locations for the third vertex is an $(n-2)$-sphere $\Sigma_1$ contained in $S_r$. Similarly, $\Sigma_0$ is the locus of all possible locations for the third vertex of an equilateral hyperbolic triangle with vertices at $0$ and $x_0$.

If $\Sigma_0$ and $\Sigma_1$ intersect, then we can choose $x_2$ to be an intersection point. Otherwise we choose $x_2 \in \Sigma_1$ to be a closest point to $x_0$. We then define $\Sigma_2$ analogously for $x_2$ and check whether $\Sigma_2$ intersects $\Sigma_0$. Continuing in this fashion, we build a chain of at most four triangles where the initial triangle has vertices $0,x_1,x_2$ and the final triangle has vertices including $0$ and $x_0$. The reason we can do this  with at most four triangles is that the interior angles of each triangle are at least $\pi/3 - \delta$.

Finally, since $f\in \mathcal{F}_{\sigma}$, we obtain
\[ L_{\rho}(0,r) \leq \sigma^4 \ell_{\rho}(0,r),\]
for all $r\leq r_0$. Hence $f$ is quasiconformal at $0$. For any other point $x\in \B^n$, we can apply M\"obius maps $A_1,A_2$ which send $x$ and $f(x)$ to $0$ respectively and then apply the above argument to $A_2\circ f \circ A_1^{-1}$.
\end{proof}

We next turn to the dimension two case. We first need some preliminary results on hyperbolic geometry.

\begin{lemma}
\label{lem:tri1}
There exists $\delta>0$ so that any equilateral hyperbolic triangle $T$ in $\D$ of side length $r\leq 1$ has the property that $B_{\rho}(c,   2\delta r) \subset T$, where $c$ is the centroid of $T$.
\end{lemma}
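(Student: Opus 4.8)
The plan is to exploit the scale-invariance of hyperbolic geometry under the isometry group, together with a compactness argument. First I would normalize: given an equilateral hyperbolic triangle $T$ of side length $r \in (0,1]$, apply a M\"obius transformation of $\D$ (a hyperbolic isometry) sending the centroid $c$ to $0$; by the construction of the centroid recalled just before the lemma, the vertices of the normalized triangle $T'$ are $t, t\omega, t\omega^2$ for some $t = t(r) \in (0,1)$, with $\omega = e^{2\pi i/3}$. Since hyperbolic balls are preserved by isometries, it suffices to show $B_\rho(0, 2\delta r) \subset T'$, i.e.\ that the hyperbolic inradius $\iota(r)$ of $T'$ (the largest $s$ with $B_\rho(0,s)\subset T'$) satisfies $\iota(r) \geq 2\delta r$ for a uniform $\delta > 0$ and all $r \in (0,1]$.

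Next I would identify $\iota(r)$ concretely. By the three-fold symmetry of $T'$ about $0$, the inradius is the hyperbolic distance from $0$ to any of the three geodesic sides; equivalently the distance from $0$ to the geodesic through $t$ and $t\omega$. The foot of the perpendicular is the hyperbolic midpoint of that side, which lies along a ray from $0$; so $\iota(r)$ equals the hyperbolic distance from $0$ to the midpoint of the edge. Using the formulas relating $r$, $t$, and the interior angle $\alpha$ — namely $r = \rho(0,t) = \log\frac{1+t}{1-t}$ was the side length in the earlier normalization, but here $t$ is the centroid-to-vertex distance, so I would instead use the right hyperbolic triangle with vertices $0$, a vertex $v$, and the midpoint $m$ of the opposite... actually the clean route is: in the right-angled hyperbolic triangle formed by $0$, a vertex, and the foot $m$ on the opposite side, the angle at the vertex is $\alpha/2$ where $\alpha$ is the interior angle of $T$, the angle at $0$ is $\pi/3$, and standard hyperbolic trigonometry (e.g.\ the formula $\cosh(\text{side})\sin(\text{opposite angle}) = \cos(\text{other angle})$, or the relation for the altitude) gives $\iota(r)$ as an explicit function of $\alpha$, hence of $r$ via equation \eqref{eq:alphar}. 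The key qualitative facts are: $\iota(r) > 0$ for every $r \in (0,1]$ (the triangle is nondegenerate, $\alpha < \pi/3$ by Lemma \ref{lem:tri0} so the geodesic sides genuinely bow inward but still bound a region with nonempty interior), and the ratio $\iota(r)/r$ is continuous on $(0,1]$ with a positive limit as $r \to 0$ — because small hyperbolic equilateral triangles are nearly Euclidean equilateral (as noted after \eqref{eq:alphar}), and a Euclidean equilateral triangle of Euclidean side length $\asymp r$ has inradius $\asymp r$, with the hyperbolic and Euclidean metrics comparable near $0$.

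Therefore $r \mapsto \iota(r)/r$ is a continuous, strictly positive function on the compact interval $(0,1]$ extended continuously to $[0,1]$ by its limit at $0$, so it attains a positive minimum; call it $2\delta$. Then $B_\rho(0, 2\delta r) \subset B_\rho(0, \iota(r)) \subset T'$ for all $r \in (0,1]$, and applying the inverse isometry gives $B_\rho(c, 2\delta r) \subset T$, as required. The main obstacle, and the only step needing genuine care rather than bookkeeping, is making the claim ``$\iota(r)/r$ extends continuously and positively to $r=0$'' fully rigorous: one must control the hyperbolic inradius uniformly as the triangle shrinks, which is where the comparison between the hyperbolic density $\lambda$ and a constant multiple of the Euclidean metric on a fixed neighborhood of the centroid's normalized position $0$ (where $\lambda(0) = 2$) is used, together with the fact from \eqref{eq:alphar} that $\alpha(r) \to \pi/3$. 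Everything else is a routine compactness-plus-hyperbolic-trigonometry computation.
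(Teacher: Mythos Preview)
Your proposal is correct and follows essentially the same route as the paper: normalize the centroid to $0$ so the vertices sit at $t, t\omega, t\omega^2$, identify the inradius as the hyperbolic distance from $0$ to the midpoint of a side, and then bound the ratio of inradius to side length from below on $(0,1]$. The only difference is that the paper carries out the midpoint and inradius computation explicitly (obtaining $\widetilde{R}(t) = t + o(t)$ against $r = 2\sqrt{3}\,t + o(t)$) and invokes monotonicity, whereas you appeal to compactness of $[0,1]$ after extending $\iota(r)/r$ continuously to $r=0$; both arguments yield the same conclusion.
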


\begin{proof}
Given any equilateral hyperbolic triangle $T$ of side length $r$, we may apply a M\"obius transformation $A$ so that the vertices of $A(T)$ lie at the points $t, \omega t, \omega^2 t$, where $t>0$ and $\omega = e^{2\pi i/3}$. By a direct computation, the quantities $r$ and $t$ are related via
\[ r = \rho(t,t\omega) = \log \frac{ 1+ \frac{t\sqrt{3}}{\sqrt{1+t^2+t^4}} }{ 1- \frac{t\sqrt{3}}{\sqrt{1+t^2+t^4}} }.\]
We see that $r = 2\sqrt{3}t+o(t)$ as $t\to 0$.

By the formula for the midpoint of a hyperbolic geodesic segment, see \cite[Proposition 3.2]{CFY}, the hyperbolic midpoint of $t\omega$ and $t\omega^2$ occurs on the negative real axis at
\[ \frac{ \sqrt{1+t^2+t^4} -1 - t^2}{t}.\]
This implies that any Euclidean ball of radius less than $R(t) = \frac{1+t^2 - \sqrt{1+t^2+t^4}}{t}$ centred at $0$ is contained in $T$. Therefore any hyperbolic ball of radius less then $\widetilde{R}(t):= \log \frac{1+R(t)}{1-R(t)}$ centred at $0$ is contained in $T$.

Now, $R$ is an increasing function of $t$ with $R(t) = t/2 + o(t)$ as $t\to 0$ (as one would expect since small equilateral hyperbolic triangles are close to small Euclidean triangles) and $\lim_{t\to 1} R(t) = 2-\sqrt{3}$. Hence $\widetilde{R}$ is also increasing with $\widetilde{R}(t) = t + o(t)$ as $t\to 0$.
Consequently, if the side length $r$ of $T$ is at most $1$, then we can find $\delta >0$ so that $B_{\rho}(0,  2\delta r)$ is contained in $T$.
\end{proof}

Given an equilateral triangle of side length $r\leq 1$, we will denote by $B_{\delta}(T)$ the ball $B_{\rho}(c,\delta r)$, where $\delta$ is from Lemma \ref{lem:tri1}. Then if $p\in B_{\delta}(T)$, we have $B_{\rho}(p,\delta r) \subset B_{\rho}(c,2\delta r) \subset T$.

If $E\subset \D$ is closed and $z\in \D \setminus E$, the hyperbolic distance between $z$ and $E$ is
\[ \rho(z,E) = \min \{ \rho(z,w) : w\in E \}.\]

\begin{lemma}
\label{lem:tri2}
Let $T$ be an equilateral hyperbolic triangle in $\D$ with side length $r\leq 1$ and let $p \in \D$. Then there exists a chain of equilateral hyperbolic triangles $T_1,\ldots, T_m$ with side length $r$, 
$T_1 = T$, $p \in T_m$ and moreover $m\leq M$, where $M = \max \{ 7, 700\rho( p,T)/r \}$.
\end{lemma}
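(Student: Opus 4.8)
The plan is to build the chain in two stages: first, a short ``travel'' portion that moves from the centroid of $T$ to a point near $p$, laying down triangles along a hyperbolic geodesic; and second, a bounded-length ``local'' portion that finishes inside a triangle containing $p$. The key geometric input is Lemma \ref{lem:tri1}, which guarantees that every equilateral hyperbolic triangle of side length $r \le 1$ contains a hyperbolic ball $B_\rho(c, 2\delta r)$ about its centroid, for a universal $\delta > 0$. The point of this is that once two triangles $T_j, T_{j+1}$ in a chain share a side, their centroids are at bounded hyperbolic distance — at most $r$, say, by a crude estimate — and conversely, given any two points at hyperbolic distance comparable to $r$, one can interpose a single equilateral triangle (of side length $r$) containing both, by placing its centroid-ball appropriately. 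I would first isolate this as a sub-claim: there is a constant $c_0 \ge 1$ so that if $\rho(z, w) \le r$ then there is an equilateral hyperbolic triangle of side length $r$ containing both $z$ and $w$, and if additionally $z$ lies in a given triangle $T'$ of side length $r$, one can choose this new triangle to share a side with $T'$ (possibly needing a uniformly bounded number of intermediate triangles to ``pivot'' around a common vertex, just as in the $n \ge 3$ argument where the bound $4$ came from the angle being close to $\pi/3$).

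Granting the sub-claim, here is the construction. Let $d = \rho(p, T)$ and let $\gamma$ be a hyperbolic geodesic segment from the centroid $c$ of $T$ to $p$; note $\rho(c, p) \le d + \operatorname{diam}_\rho(T) \le d + 2r$ since $T$ has side length $r \le 1$ (the hyperbolic diameter of an equilateral triangle of side length $r$ is at most $2r$). Subdivide $\gamma$ into $\lceil \rho(c,p)/r \rceil$ sub-arcs of hyperbolic length at most $r$, with consecutive division points $c = q_0, q_1, \ldots, q_k = p$. Starting from $T_1 = T$, which contains $q_0 = c$, apply the sub-claim repeatedly: having placed $T_j$ containing $q_{j-1}$, use the sub-claim to add a bounded number (at most some universal $N$) of triangles forming a chain from $T_j$ to a triangle containing $q_j$. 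After processing all $k$ division points we reach a chain ending in a triangle containing $q_k = p$. The total number of triangles is at most $1 + Nk \le 1 + N\big(\rho(c,p)/r + 1\big) \le 1 + N(d/r + 3)$. Choosing the constants generously (the factor $700$ in $M$ and the baseline $7$ give ample room — one expects $N$ to be something like a few tens at worst, coming from pivoting by angles near $\pi/3$), this is bounded by $\max\{7, 700 d/r\}$, which is the claimed $M$. The baseline value $7$ in the max handles the case $d = 0$, i.e. $p \in T$ already, where we may still need a few triangles if $p$ is not conveniently placed, though in fact $m = 1$ suffices then; the $7$ is simply a safe cushion.

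The main obstacle I anticipate is the sub-claim, specifically the ``pivoting'' step: given a triangle $T_j$ of side length $r$ containing a point $q$, and a target point $q'$ with $\rho(q, q') \le r$, one must produce a \emph{chain} (consecutive triangles sharing full sides) from $T_j$ to some triangle containing $q'$, using only a universally bounded number of steps. This is exactly the two-dimensional analogue of the four-triangle argument in the $n \ge 3$ proof, but now one cannot move the third vertex continuously around a sphere — one is restricted to the rigid combinatorics of side-sharing in the hyperbolic plane. The resolution is that around any edge or vertex, finitely many equilateral triangles of side length $r$ fit (the vertex angle is between $2\pi/7$ and $\pi/3$ by Lemma \ref{lem:tri0}, so at least $6$ and at most $7$ triangles meet at a vertex), and by reflecting $T_j$ successively across its sides one can ``rotate'' around a vertex to reach any adjacent triangle; combining a bounded rotation with at most one or two translations along geodesics covers all target points within distance $r$. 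Making the bound on $N$ explicit — and checking it comfortably beats the slack in $M$ — is the one place real care is needed; everything else is assembling these bounded moves and summing a geometric count along the geodesic.
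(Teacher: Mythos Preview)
Your high-level plan---advance toward $p$ iteratively using bounded local moves---is the same as the paper's, but the implementation and the arithmetic do not close, and the gap is exactly where you flag it.

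The paper's argument is more direct and is calibrated to the specific constant $M$. After normalizing $T$ to have centroid $0$ and a vertex on the positive real axis, it rotates $T$ about that vertex through the interior angle $\alpha$; since $\alpha\ge 2\pi/7$ by Lemma~\ref{lem:tri0}, at most seven such rotates close up around the vertex, and this flower of seven triangles covers the sector of the $r/100$-neighborhood of $T$ facing $p$. A geodesic realizing $\rho(p,T)$ must cross this sector, so one of the seven triangles is at least $r/100$ closer to $p$. Iterating, each block of $7$ triangles reduces the distance by $r/100$, giving $m\le 7\cdot 100\cdot \rho(p,T)/r = 700\rho(p,T)/r$, and the $\max$ with $7$ handles the first (or only) block.

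Your version tries to advance by a full $r$ per block of $N$ triangles via a sub-claim you do not prove. Even granting the sub-claim, the bound $1+N(d/r+3)$ is not dominated by $\max\{7,700d/r\}$: the regime $0<d\le r/100$ forces $M=7$, yet your count is at least $1+2N$ (since the geodesic from the centroid to $p$ already has length comparable to $r$), which requires $N\le 3$---far stronger than the ``few tens'' you anticipate from pivoting near angle $\pi/3$. So the baseline $7$ is not a cushion for your scheme; it is sharp for the paper's scheme precisely because one flower of seven triangles already covers the entire $r/100$-neighborhood.

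In short: your sub-claim, even if true, is the wrong granularity. The fix is exactly the paper's move---advance $r/100$ per seven triangles by rotating about a vertex (which Lemma~\ref{lem:tri0} makes immediate), rather than $r$ per $N$ triangles via an unquantified pivot.
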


\begin{proof}
Without loss of generality, we may apply a M\"obius map so that $T$ has vertices $t,t\omega, t\omega^2$, where $t>0$ and $\omega = e^{2\pi i/3}$ and $T $ has centroid $0$. Further, we may assume that $-\pi/3 \leq \arg p \leq \pi/3$, otherwise apply a rotation permuting the vertices of $T$.

Since $r\leq 1$, Lemma \ref{lem:tri0} implies that the internal angles $\alpha$ of $T$ are at least $2\pi/7$. Consequently, if we form a chain of triangles by rotating $T$ in the clockwise direction through angle $\alpha$ about $t$, then by the time we add in the seventh triangle, we will intersect $T$.

Let $U$ be an open $r/100$ neighborhood of $T$ in the hyperbolic metric and let
\[ U' =  \{ z: z\in U \text{ and } \arg z \in [-\pi/3, \pi/3 ] \}.\]
Then the collection $\mathcal{C}$ of seven triangles obtained by forming the chain around the point $t \in T$ covers $U'$. If $p$ lies in $T$ or this chain, then we are done. Otherwise, consider a geodesic segment realizing the distance $\rho(p,T)$. This segment must cross $U'$ and consequently there is a triangle $T_1 \in \mathcal{C}$ satisfying
\[ \rho(p,T_1) \leq  \rho(p,T) - \frac{r}{100}.\]
Repeating this process, we are able to construct a chain of triangles as required. Each step requires at most seven triangles and so the maximum number required is
$\frac{700\rho(p,T)}{r}$.
\end{proof}

\begin{lemma}
\label{lem:angle}
Let $0<t\leq 1$ and let $T$ be the hyperbolic triangle with vertices $v_1,v_2,v_3$ so that $v_1=0$, $\arg (v_2) = e^{i\pi/3}$, $\arg (v_3) = e^{-i\pi/3}$ and $\rho(v_1,v_2) = \rho(v_1,v_3) = t$. Then given $\epsilon >0$, there exists $\xi>0$ so that if $\rho(v_i,w_i) < t\xi$ for $i=1,2,3$ and if $\phi$ denotes the angle $\angle w_2w_1w_3$ of the hyperbolic triangle $T'$ with vertices $w_1,w_2,w_3$, then $|2\pi/3 - \phi|<\epsilon$.
\end{lemma}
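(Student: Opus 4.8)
The plan is to normalize via the given data, recast the angle condition through the hyperbolic law of cosines, and reduce the lemma to a continuity estimate that is \emph{uniform} in $t$ --- the uniformity being the only real issue, since for a fixed $t$ the statement is just continuity of the angle of a triangle in its vertices. Because $v_1=0$ and the hyperbolic metric on $\D$ is conformal to the Euclidean one, with geodesics through $0$ being Euclidean radial segments, the angle of $T$ at $v_1$ is the Euclidean angle between the rays of argument $\pi/3$ and $-\pi/3$, that is $2\pi/3$. Set $c(t)=\rho(v_2,v_3)$; the hyperbolic law of cosines gives $\cosh c(t)=\cosh^2 t+\tfrac12\sinh^2 t$, and from the expansion $c(t)/t\to\sqrt3$ as $t\to0^+$ together with continuity one gets $t<c(t)\le C_0t$ for an absolute constant $C_0$ and all $t\in(0,1]$. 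For the perturbed triangle $T'$, writing $a'=\rho(w_1,w_2)$, $b'=\rho(w_1,w_3)$, $c'=\rho(w_2,w_3)$, the law of cosines gives $\cos\phi=F(a',b',c')$ where
\[ F(a,b,c)=\frac{\cosh a\cosh b-\cosh c}{\sinh a\sinh b}, \]
and, by construction, $F(t,t,c(t))=-\tfrac12=\cos(2\pi/3)$. So it is enough to make $|F(a',b',c')+\tfrac12|$ small by choosing $\xi$ small, with $\xi$ not depending on $t$.

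I would estimate this difference by the mean value inequality. The triangle inequality gives $|a'-t|\le\rho(w_1,v_1)+\rho(v_2,w_2)<2t\xi$, and similarly $|b'-t|<2t\xi$ and $|c'-c(t)|<2t\xi$; hence, provided $\xi<1/4$, the points $(a',b',c')$ and $(t,t,c(t))$ and the segment between them all lie in the box $Q_t=[t/2,2t]\times[t/2,2t]\times[t/2,(C_0+1)t]$ (using $c(t)>t$ for the lower bound on the third coordinate). Since $F$ is smooth on $\{a>0,\ b>0\}$,
\[ \left|F(a',b',c')+\tfrac12\right|\le\Bigl(\sup_{Q_t}|\nabla F|\Bigr)\bigl(|a'-t|+|b'-t|+|c'-c(t)|\bigr)<6t\xi\,\sup_{Q_t}|\nabla F|. \]
The crux of the proof --- and the step I expect to be the main obstacle --- is the bound $\sup_{Q_t}|\nabla F|\le C/t$ with $C$ independent of $t\in(0,1]$; this is precisely where the relative scaling $t\xi$ in the hypothesis is needed. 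Granting it, $|\cos\phi+\tfrac12|<6C\xi$, and since $\cos|_{[0,\pi]}$ has a Lipschitz inverse near $-\tfrac12$ (with some constant $L$ on, say, $[-3/4,-1/4]$), this forces $|\phi-2\pi/3|\le6CL\xi$ as soon as $6C\xi<1/4$. Taking $\xi=\xi(\epsilon)<\min\{1/4,\,1/(24C),\,\epsilon/(6CL)\}$, which does not involve $t$, completes the argument.

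It remains to establish the uniform gradient bound on $Q_t$. For $t$ bounded below, $t\in[t_0,1]$, this is immediate: $\bigcup_{t\in[t_0,1]}Q_t$ is a compact subset of $\{a>0,\ b>0\}$ on which the smooth function $|\nabla F|$ is bounded, say by $B$, and $B\le B/t$ since $t\le1$. For $t\to0$ I would use the expansions $\sinh x=x+O(x^3)$, $\cosh x=1+O(x^2)$: on $Q_t$ all of $a,b,c$ are $\Theta(t)$, so $\sinh a\sinh b=\Theta(t^2)$ and $\cosh a\cosh b-\cosh c=\tfrac12(a^2+b^2-c^2)+O(t^4)=O(t^2)$. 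Feeding these into the quotient rule, $\partial_a F$ and $\partial_b F$ take the form $O(t^3)/\Theta(t^4)$, while $\partial_c F=-\sinh c/(\sinh a\sinh b)=\Theta(t)/\Theta(t^2)$; hence each partial derivative is $O(1/t)$, with constants uniform over $Q_t$ for $t$ in a fixed interval $(0,t_0]$. Patching the two ranges gives $\sup_{Q_t}|\nabla F|\le C/t$ for all $t\in(0,1]$, as required. (Alternatively one could prove the uniform gradient bound --- or indeed the whole lemma --- by a compactness/contradiction argument: a failing sequence with $t_k,\xi_k\to0$ would, after the Euclidean rescaling $z\mapsto z/t_k$ and using $\rho(0,z)=2|z|+O(|z|^3)$, produce rescaled vertices converging to those of a fixed Euclidean triangle with angle $2\pi/3$ at the origin, with the hyperbolic angles $\phi_k$ converging to that Euclidean angle; the one point needing care is that the rescaled hyperbolic geodesics converge to Euclidean segments.)
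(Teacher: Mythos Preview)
Your proof is correct and follows essentially the same route as the paper: both express $\cos\phi$ via the hyperbolic Law of Cosines as $F(a',b',c')=(\cosh a'\cosh b'-\cosh c')/(\sinh a'\sinh b')$ and then argue by continuity of this expression in the side lengths. The paper simply sandwiches $\cos\phi$ between two explicit expressions obtained by inserting the extremal side lengths $(1\pm 2\xi)t$ and $h(t)\pm 2\xi t$ and appeals to continuity as $\xi\to 0$; your mean-value-inequality argument with the uniform gradient bound $\sup_{Q_t}|\nabla F|\le C/t$ is more explicit about the uniformity in $t\in(0,1]$ (which the paper leaves implicit) and yields the quantitative estimate $|\cos\phi+\tfrac12|=O(\xi)$ that the paper's bare continuity statement does not.
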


\begin{proof}
By the hyperbolic Law of Cosines, 
\begin{equation}
\label{eq:angle}
\cos \phi = \frac{ \cosh ( \rho(w_1,w_2) ) \cosh ( \rho(w_1,w_3) ) - \cosh ( \rho(w_2,w_3) ) }{ \sinh (\rho(w_1,w_2) ) \sinh (\rho (w_1,w_3) ) }.
\end{equation}
 Clearly by construction the angle $\angle v_2v_1v_3$ is $2\pi/3$, and so
replacing the $w_i$ by the $v_i$ in this formula, we obtain $\cos(2\pi/3) = -1/2$. 
By the hypotheses and the triangle inequality,
\[ (1-2\xi)t <\rho ( w_1,w_i) <(1+2\xi) t\]
for $i=2,3$.
Writing $h(t) = \rho(v_2,v_3)$, by the triangle inequality,
\[ h(t) - 2\xi t < \rho(w_2,w_3) < h(t) + 2\xi t.\]
We therefore see
\[ \frac{ \cosh^2( (1-2\xi)t ) - \cosh( h(t) + 2\xi t) }{\sinh^2((1+2\xi)t) }< \cos \phi <\frac{ \cosh^2( (1+2\xi)t ) - \cosh( h(t) - 2\xi t) }{\sinh^2((1-2\xi)t) }.\]
By the continuity of the functions involved here and since the limit as $\xi \to 0$ of both left and right hand sides is $-1/2 = \cos(2\pi/3)$, the claim follows.
\end{proof}

Our final preliminary result we need is to construct a certain self-map of the unit disk which fixes the origin, acts as a rotation on each circle centered at the origin and generates equilateral hyperbolic triangles. We recall that a homeomorphism $f:\D\to \D$ is called {\it locally quasiconformal} if and only if for each compact siubset $E \subset \D$, $f|_E$ is quasiconformal. See for example \cite{Li}. In particular, $|\mu_f(z)|$ is allowed to converge to $1$ as $|z|\to 1$.

\begin{lemma}
\label{lem:map}
The map $R_0 : \D \to \D$ defined in polar coordinates by 
\[ R_0(te^{i\theta}) = t \exp \left [ i \left ( \theta + \cos^{-1} \left ( \frac{1+t^2}{2} \right ) \right ) \right ] \]
is locally quasiconformal, fixes $0$ and, moreover, for any $w \in \D \setminus \{ 0 \}$, the hyperbolic triangle with vertices $0,w$ and $R_0(w)$ is equilateral.
\end{lemma}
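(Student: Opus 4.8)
The plan is to dispatch the three assertions in turn, the local quasiconformality being the only one that needs real work. Throughout, abbreviate $\alpha(t) = \cos^{-1}\left(\frac{1+t^2}{2}\right)$; since $\frac12 < \frac{1+t^2}{2} < 1$ for $0<t<1$, this is a well-defined real-analytic function on $(0,1)$ with values in $(0,\pi/3)$, and it extends continuously to $[0,1)$ with $\alpha(0)=\pi/3$ and $\alpha'(0)=0$. That $R_0$ fixes $0$ is clear once we set $R_0(0)=0$, which is forced by $|R_0(z)|=|z|$. For the homeomorphism property, $R_0$ is a bijection of $\D$ with inverse $R_0^{-1}(se^{i\psi}) = se^{i(\psi-\alpha(s))}$; continuity of $\alpha$ on $(0,1)$ makes both $R_0$ and $R_0^{-1}$ continuous on $\D\setminus\{0\}$, and $|R_0(z)|=|z|$ takes care of the origin.

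For the equilateral claim, fix $w\in\D\setminus\{0\}$. By rotation invariance of $\rho$ about the origin we may assume $w=t\in(0,1)$, so $R_0(w)=te^{i\alpha(t)}$ and $\rho(0,w)=\rho(0,t)=\rho(0,te^{i\alpha(t)})=\rho(0,R_0(w))$; thus two of the three sides already have equal length $\rho(0,t)$, and it remains only to check $\rho(t,te^{i\alpha(t)})=\rho(0,t)$. From the formula for the hyperbolic metric on $\D$ given above, $\rho$ is a strictly increasing function of $\left|\frac{z-w}{1-\overline{w}z}\right|$, and this quantity equals $t$ for $(z,w)=(0,t)$; hence the required identity is equivalent to $\left|\frac{t-te^{i\alpha}}{1-t^2e^{-i\alpha}}\right|=t$, i.e. $|1-e^{i\alpha}|=|1-t^2e^{-i\alpha}|$. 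Squaring both sides reduces this to $(1-t^2)(1+t^2-2\cos\alpha)=0$, which, since $t\neq 1$, holds exactly because $\cos\alpha(t)=\frac{1+t^2}{2}$. (This is the computation behind (\ref{eq:alphax}), read in reverse.)

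For local quasiconformality, fix $0<s<1$; since every compact subset of $\D$ is contained in some $B(0,s)$, it suffices to prove $R_0|_{B(0,s)}$ is $K(s)$-quasiconformal. On $\D\setminus\{0\}$, writing $t=|z|$, the map $R_0(z)=ze^{i\alpha(|z|)}$ is real-analytic, and using $\partial_z|z|=\bar z/(2|z|)$ and $\partial_{\bar z}|z|=z/(2|z|)$ one computes
\[ (R_0)_z = e^{i\alpha(t)}\Bigl(1+\tfrac{i}{2}\,t\,\alpha'(t)\Bigr), \qquad (R_0)_{\bar z} = e^{i\alpha(t)}\,\tfrac{i}{2}\,\alpha'(t)\,\frac{z^2}{t}, \]
so that, with $u(t)=\tfrac12\,t\,\alpha'(t)$,
\[ |\mu_{R_0}(z)| = \frac{|u(t)|}{\sqrt{1+u(t)^2}} < 1. \]
Differentiating gives $\alpha'(t)=-\frac{2t}{\sqrt{(1-t^2)(3+t^2)}}$, hence $|t\,\alpha'(t)| = \frac{2t^2}{\sqrt{(1-t^2)(3+t^2)}}$ is a continuous increasing function on $[0,1)$ that tends to $\infty$ as $t\to 1$. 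In particular $|\mu_{R_0}|\le k(s)<1$ on $B(0,s)$ for a constant $k(s)$ depending only on $s$, while $|\mu_{R_0}(z)|\to 1$ as $|z|\to 1$ — which is precisely why $R_0$ is only locally, and not globally, quasiconformal, consistent with the remark preceding the lemma. Finally, letting $z\to 0$ in the displayed formulas and using $\alpha(0)=\pi/3$, $\alpha'(0)=0$ gives $(R_0)_z\to e^{i\pi/3}$ and $(R_0)_{\bar z}\to 0$, so $R_0$ is in fact $C^1$ on all of $B(0,s)$ with $DR_0(0)$ a rotation. A $C^1$ homeomorphism of a planar domain with $\|\mu_{R_0}\|_\infty\le k(s)<1$ is $K(s)$-quasiconformal with $K(s)=\frac{1+k(s)}{1-k(s)}$, and the proof is complete.

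The only real obstacle is the bookkeeping in the last part: pinning down $t\,\alpha'(t)$ as the quantity that controls the dilatation, observing that $|\mu_{R_0}|<1$ comes for free while $|\mu_{R_0}|\to 1$ only at the boundary, and handling the apparent singularity of $R_0$ at the origin. The other two assertions are immediate from rotation invariance of the hyperbolic metric together with the relation $\cos\alpha=\frac{1+t^2}{2}$ already established above.
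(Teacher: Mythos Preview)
Your proof is correct and takes essentially the same approach as the paper: both compute the complex dilatation of $R_0$ and show it is bounded away from $1$ on compact subsets while tending to $1$ at the boundary, the only cosmetic difference being that the paper uses the polar-coordinate formula for $\mu$ to obtain $|\mu_{R_0}(te^{i\theta})|=t^2/\sqrt{3-2t^2}$ directly, whereas your expression $|u(t)|/\sqrt{1+u(t)^2}$ with $u(t)=-t^2/\sqrt{(1-t^2)(3+t^2)}$ simplifies to the same quantity. You are slightly more thorough than the paper in spelling out the equilateral computation (the paper just cites \eqref{eq:alphax}) and in addressing regularity at the origin.
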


\begin{proof}
It is clear that $R_0$ acts as a rotation on each circle centered at the origin. Moreover, the claim on equilateral hyperbolic triangles follows from \eqref{eq:alphax}. The formula for the complex dilatation of $R_0$ in terms of polar coordinates is
\[ \mu_{R_0} = e^{2i\theta} \left ( \frac{ (R_0)_t + \frac{i}{t} (R_0)_{\theta}}{ (R_0)_t - \frac{i}{t} (R_0)_{\theta} } \right ).\]
Via elementary computations we have
\[ (R_0)_{\theta} = it  \exp \left [ i \left ( \theta + \cos^{-1} \left ( \frac{1+t^2}{2} \right ) \right ) \right ]\]
and 
\[ (R_0)_t =  \left ( 1 -\frac {2it^2}{\sqrt{(3+t^2)(1-t^2) } } \right)  \exp \left [ i \left ( \theta + \cos^{-1} \left ( \frac{1+t^2}{2} \right ) \right ) \right ].\]
Hence we obtain
\[ | \mu_{R_0} (te^{i\theta}) | = \frac{t^2}{ |\sqrt{ (3+t^2)(1-t^2)} - it^2 |} = \frac{t^2}{\sqrt{3-2t^2}}.\]
Hence $R_0$ is quasiconformal on each compact subset of $\D$, but $|\mu_{R_0} (z)| \to 1$ as $|z|\to 1$.
\end{proof}

\begin{definition}
\label{def:rz}
For $w\in \D$, let $A_w(z) = \frac{z-w}{1-\overline{w}z}$. Then define $R_w :\D \to \D$ by $R_w = A_w^{-1} \circ R_0 \circ A_w$.
\end{definition}

The key property is that for any $z\neq w$, the hyperbolic triangle with vertices $w,z,R_w(z)$ is equilateral.

With these results in hand, we can prove the remaining case of Theorem \ref{thm:1}.

\begin{proof}[Proof of Theorem \ref{thm:1} when $n=2$]
Let $\sigma \geq 1$ and suppose that $f \in \mathcal{F}_{\sigma}$. Let $T$ be an equilateral hyperbolic triangle of side length $r\leq 1$. Then by Lemma \ref{lem:tri1}, we know that $B_{\delta}(T) \subset T$ for a constant $\delta>0$ independent of $r$.  The definition of $B_{\delta} (T) $ is given directly after the proof of Lemma \ref{lem:tri1}.

We will prove the theorem in a number of steps. Given $p$ close to the centroid of $T$, we find a chain of small equilateral triangles connecting a side of $T$ to $p$. Then we find a particular small equilateral triangle close to $p$, and show that this construction implies that the image $f(T)$ contains a disk of a definite size, relative to the side length of $f(T)$, centred at $f(p)$. Finally, we show how this implies that $f$ satisfies the metric definition of quasiconformality.

{\bf Step 1: constructing a chain of small triangles.}
 Let $p\in B_{\delta}(T)$ and let $ n\in\mathbb{N} $. We will specify how large $ n$ must be later.
Select the side of $T$ which realizes $L(f(T))$, the maximum distance between two vertices of the topological triangle $f (T) $, and subdivide this side into $r/n$ segments of equal length.  Let $v, w $ be the endpoints of the segment whose image has the largest length and $T_1$ be the equilateral triangle in $T$ which has one side with vertices $v,w$. Therefore
\[ L(f(T)) \leq n \rho ( f(v) , f(w) ).\]
Apply Lemma \ref{lem:tri2} to find a chain of triangles $T_1,\ldots, T_m$ of side length $r/n$ with $p\in T_m$. 
Since $T$ has side length length less than or equal to 1 which implies $\rho(p,T_1)\leq 1$, we can achieve this with $m \leq M = 700n$ triangles.
Since $f\in \mathcal{F}_{\sigma}$, we find by induction that if $v', w'$ is any other side in the chain,
\[ \rho( f(v), f(w) ) \leq \sigma ^M \rho (f(v'), f(w') )\]
and hence
\[ L(f(T)) \leq n \sigma^M \rho (f(v') , f(w') ).\]
Choose $v',w'$ to be any two vertices of $T_m$ that are different from $p$ (typically $p$ will not be a vertex of $T_m$). Then for
one of $v',w'$, denoted by $q$, we are guaranteed by the triangle inequality to have
\[ L(f(T)) \leq 2n \sigma^M \rho ( f(p) , f(q) ),\]
and
\[ \rho (p,q) \leq r/n.\]

{\bf Step 2: constructing a small equilateral triangle.}
Denote by $\mu$ the distance from $f(p)$ to $\partial f(T)$. 
We can realize $\mu$ as the length of a hyperbolic geodesic segment joining $f(p)$ to $\partial f(T)$. Let $\gamma$ be the pre-image of this geodesic segment and further denote by $\gamma_1$ the component of $\gamma \cap B_{\rho}(p,\delta r)$ that contains $p$.
Next, denote by $\gamma_2$ the curve $R_p(\gamma_1) \cup R_p^{-1}(\gamma_1)$. We observe that since $R_p$ is locally quasiconformal, so is $R_p^{-1}$ and hence $\gamma_2$ really is a curve. For $t\in \gamma_2$, we can find $s\in \gamma_1$ that arises as its pre-image under either $R_p$ or $R_p^{-1}$. Then
\begin{equation}
\label{eq:thm11}
\rho( f(t),f(p)) \leq \sigma \rho( f(s),f(p) ) \leq \sigma \mu,
\end{equation}
since the triangle with vertices $s,t$ and $p$ is equilateral by Lemma \ref{lem:map}.

Next, we need to ensure that $\gamma_2$ is well-behaved near its endpoints. To that end, we will slightly enlarge the curve, while maintaining an inequality similar to \eqref{eq:thm11}.
Given $\epsilon <1$, take the corresponding $\xi$ from Lemma \ref{lem:angle}.
Denote by $a,b$ the endpoints of $\gamma_2$ on $\partial B_{\rho}(p,\delta r)$ and let $B_a,B_b$ be the disks $B_{\rho}(a,\delta r \xi)$ and $B_{\rho}(b,\delta r \xi)$ respectively. Write $\gamma_{2a}$ and $\gamma_{2b}$ for the components of $\gamma_2 \cap B_a$ and $\gamma_2 \cap B_b$ that have endpoints at $a$ and $b$ respectively.

We focus on extending $\gamma_{2a}$ in $B_a$ and will perform an analogous construction for $\gamma_{2b}$ in $B_b$.
Denote by $a'$ the endpoint of $\gamma_{2a}$ on $\partial B_a$. 
Use the hyperbolic geodesic through $a$ tangent to $B_{\rho}(p,\delta r)$ at $a$ to divide $B_a$ into two parts and then
use hyperbolic rotations through angle $\pi /3$ about $a$ to divide $B_a$ into six sectors, each of which has angle $\pi/3$ seen from $a$. Let $S_a$ denote the middle sector that has no intersection with $B_{\rho}(p,\delta r)$. By Lemma \ref{lem:tri0} and Lemma \ref{lem:map}, $R_a$ acts on $\partial B_a$ by rotating through an angle between $2\pi/7$ and $\pi/3$. Hence there exists $n\in \Z$ with $|n| \leq 3$ so that $R_a^n(a')$ lies in the sector $S_a$. Let the image of $\gamma_{2a}$ under $R_a^n$ be denoted by $\gamma_{3a}$.

 Let $t \in \gamma_{3a}$  and $t_0 \in \gamma_{2a}$  such that $R_a^n(t_0) = t $. If $t_k$ denotes the image of $t_0$ under $R_a^k$, then $a, t_{k-1}$ and $t_k$ form an equilateral triangle and so
\[ \rho( f(t_{k-1}), f(t_k) ) \leq \sigma \rho ( f(t_{k-1}), f(a) ),\]
since $f\in \mathcal{F}_{\sigma}$.
By the triangle inequality and \eqref{eq:thm11} we have
\[ \rho( f(a), f(t_0) ) \leq \rho (f(a), f(p) ) + \rho ( f(p), f(t_0) ) \leq 2\sigma \mu.\]
Then we conclude that since $|n| \leq 3$,

\begin{align} 
\label{eq:t_ineq}
\rho ( f(t), f(p) ) & \leq \rho( f(a), f(p )) + \rho(f(a), f(t) )\\ 
\nonumber &\leq \rho ( f(a) , f(p) ) + \sigma^3 \rho ( f(a) , f(t_0) )\\
\nonumber &\leq \sigma \mu + 2\sigma^4 \mu \\
\nonumber &= \sigma \mu( 1+2\sigma^3).
\end{align}

If  $\delta ' \leq \xi\delta$ is chosen appropriately so that the endpoints of the geodesic segments forming $S_a$ intersect $\partial B_a$ on $\partial B_{\rho}(p,(\delta + \delta ')r)$, then $\gamma_{3a}$ must also intersect $\partial B_{\rho}(p,(\delta + \delta ')r)$.

We make the same construction using the point $b$ instead of the point $a$ and then define $\gamma_3$ to be the connected component of $(\gamma_2 \cup \gamma_{3a} \cup \gamma_{3b} ) \cap B_{\rho}(p, (\delta + \delta ')r)$  that includes $p$. By the construction above, we have
\[ \rho( f(t), f(p) ) \leq \sigma \mu ( 1+2\sigma ^3)\]
for all $t\in \gamma_3$.

We will use this curve $\gamma_3$ to find the required equilateral triangle. 
Recall the point $q$ from Step 1, and let $B_q$ be the smallest disk centred at $q$ which contains $B_{\rho}(p, \delta r)$. 
We choose $n$ large enough in Step 1 so that $1/n < \delta \xi$ and $B_q \subset B_{\rho} ( p, (\delta + \delta ' )r)$.
Let $\gamma_4$ be the connected component of $\gamma_3 \cap B_q$ with endpoints $A \in B_a \cap \partial B_q$ and $B\in B_b \cap \partial B_q$.

By our construction,
the angle made by the geodesics joining $A$ to $q$ and $B$ to $q$ make an angle in $(\pi/3,\pi)$. To see this, take a M\"obius map $M$ which moves $p$ to $0$. Then since $\rho(p,q) < \xi \delta r$, $A\in B_a, B\in B_b$ and $\epsilon <1<\pi/3$ (recall $\epsilon $ was selected in the second paragraph of Step 2) applying Lemma \ref{lem:angle} to $M(q), M(A)$ and $M(B)$ gives the claim.

Since we may assume that $(\delta + \delta ' )r <1$, by Lemma \ref{lem:tri0} $R_q$ acts on $\partial B_q$ by a rotation with angle strictly between $2\pi/7$ and $\pi/3$.
It follows that the images $R_q(A)$ and $R_q(B)$ will separate $A$ and $B$ on $\partial B_q$. Consequently $R_q(\gamma_4)$ must intersect $\gamma_4$.

We then take $t_1$ to be an intersection point, $t_2$ to be its pre-image under $R_q$ and obtain, via Lemma \ref{lem:map}, an equilateral triangle with vertices $q,t_1$ and $t_2$. It is possible that these vertices coincide if $\gamma_4$ passes through $q$ and then we obtain the trivial triangle.  Note by \eqref{eq:t_ineq} we have
\[\rho( f(t_i), f(p) ) \leq \sigma \mu ( 1+2\sigma ^3)\]
for $i\in \{1, 2\} $.

{\bf Step 3: a disk of a definite size in the image.} 
In Steps 1 and 2, given $p\in B_{\delta}(T)$, we found an equilateral triangle with vertices $q,t_1,t_2$ and constants $D_1,D_2$ so that
\begin{equation}
\label{eq:step2} 
\rho( f(t_j), f(p) ) \leq D_1\mu, \quad \rho(f(p) , f(q) ) \geq D_2 L(f(T)),
\end{equation}
recalling that $\mu$ is the distance from $f(p)$ to $\partial f(T)$. If $t_1=t_2 =q$, then by \eqref{eq:step2}
\[ D_2 L(f(T)) \leq \rho ( f(p) , f(q) ) \leq D_1 \mu,\]
which implies there is a disk in $f(T)$ centred at $f(p)$ of radius at least $D_2 L(f(T)) / D_1$.
Otherwise, we have by the triangle inequality and the assumption that $f\in \mathcal{F}_{\sigma}$ that
\begin{align*}
\rho ( f(p ) , f(q) ) - \rho( f(t_1),f( p ) ) &\leq \rho ( f(t_1) , f(q) ) \\
&\leq \sigma \rho ( f(t_1) , f(t_2) ) \\
&\leq \sigma( \rho ( f(t_1) , f(p ) ) + \rho ( f(p ) , f(t_2) ) ) .
\end{align*}
Now using \eqref{eq:step2}, we obtain
\[ D_2L(f(T)) - D_1 \mu \leq 2\sigma D_1 \mu,\]
and so
\[ \mu \geq \frac{ D_2 L(f(T)) }{(2\sigma + 1) D_1}.\]
Again we conclude that there is a disk of size $\beta$ centred at $f(p)$ in $f(T)$, where $\beta$ depends only on $\sigma$ and $L(f(T))$ (note that the side length of $T$ is $r\leq 1$ and $\beta$ does not depend on $r$ once we have fixed this upper bound).

{\bf Step 4: Showing $f$ is quasiconformal.} We first assume that $f$ fixes $0$. Let $r\leq 1$. By pre-composing $f$ with a rotation, we may assume that $L_{\rho}(0,r)$ is taken at $z_0$ on the positive real axis, where $\rho(0,z_0) = r$. Let $T_1$ be the hyperbolic equilateral triangle with vertices $0,z_0$ and $z_0 e^{i\alpha}$ and centroid $c_0$. By Step  3, $f(T_1)$ contains a disk centred at $c_0$ with radius at least $\beta L(f(T_1))$.

There exists a hyperbolic isometry which maps $z_0$ and $c_0$ to $c_0$ and $0$ respectively and $T_1$ onto an equilateral hyperbolic triangle $T_2$. Moreover, $0 \in B_{\delta}(T_2)$ because 0 is the centroid of $T_1$ . Since one vertex of $T_2$ is contained in $T_1$ and the other two are outside, it follows that $L(f(T_2)) \geq \beta L(f(T_1))$. Since $0\in B_{\delta}(T_2)$, we can apply Step 3 again to see that $f(T_2)$ contains the disk $B_{\rho}(f(0) , \beta L(f(T_2)) )$. In conclusion,
\[ \ell_{\rho}(0,r) \geq \beta L(f(T_2)) \geq \beta ^2 L(f(T_1)) \geq \beta ^2 L_{\rho}(0,r).\]
Since this is true for all $r\leq 1$, we see that $f$ is quasiconformal at $0$ with linear distortion bounded above by $1/\beta^2$.

If $f$ does not fix $0$, then consider any $z\in \D$ with image $f(z)$. Find M\"obius maps $A_1,A_2$ which map $z$ and $f(z)$ to $0$ respectively and apply the above argument to $A_2 \circ f \circ A_1^{-1}$ to see that $f$ is quasiconformal at $z$. Since $z$ was arbitrary and the bound on the linear distortion is independent of $z$, the proof is complete.
\end{proof}

\section{Quasiconformal implies Quasisymmetric}

In this section, we will prove Theorem \ref{thm:2}. The main idea in proving quasiconformal implies quasisymmetric in the hyperbolic ball is to split  the proof into two cases. On large scales, quasiconformal maps are bi-Lipschitz by Theorem \ref{thm:qc1}, whereas on small scales quasiconformal maps are quasisymmetric by Theorem \ref{thm:qc2}. We just need to be a little careful in combining these two results.

Throughout this section, we fix $n\geq 2$ and equip the unit ball $\B^n$ in $\R^n$ with the hyperbolic metric $\rho$.

\begin{lemma}
\label{lem:qcqs1}
Suppose that $f:\B^n \to \B^n$ is $K$-quasiconformal, $f$ fixes $0$ and $t>0$. Then there exists a constant $\eta$ depending only on $t,n$ and $K$ so that
\[ \frac{ L_{\rho}(0,tr) }{ \ell_{\rho}(0,r) }  \leq \eta \]
for all $r>0$. 
\end{lemma}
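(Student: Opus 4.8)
The plan is to split the estimate at a fixed large scale, say radius $1$, and combine the two regimes provided by Theorems \ref{thm:qc1} and \ref{thm:qc2}. Concretely, I would first deal with the large-scale behaviour: by Theorem \ref{thm:qc1} there are constants $C_1, C_2$ depending only on $K$ (recall the hyperbolic and quasihyperbolic metrics are bi-Lipschitz equivalent on $\B^n$, so the dimension-independent Gehring--Osgood/Vuorinen bound applies) such that $\rho(f(0),f(y)) \leq C_1\max\{\rho(0,y)^{1/K}, \rho(0,y)\}$ and $\rho(f(0),f(y)) \geq C_2\min\{\rho(0,y)^{K}, \rho(0,y)\}$. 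These immediately control $L_\rho(0,s)$ from above and $\ell_\rho(0,s)$ from below whenever $s$ is bounded below, say $s \geq 1/2$. For the small-scale behaviour I would invoke Theorem \ref{thm:qc2}: fix a ball $B(0,R_0) \subset \B^n$ in the \emph{Euclidean} metric on which $f$ restricted to, say, $B(0,R_0/2)$ is $\xi$-quasisymmetric with $\xi$ depending only on $n$ and $K$; translating this into the hyperbolic metric (again using bi-Lipschitz equivalence on a neighbourhood of $0$) yields that $f$ is $\xi'$-quasisymmetric on a fixed hyperbolic ball $B_\rho(0, r_1)$, with $\xi'$ depending only on $n$ and $K$. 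In particular $L_\rho(0,s)/\ell_\rho(0,s) \leq \xi'(1) =: H_0$ for all $s \leq r_1$.

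Next I would handle the three cases for the pair $(tr, r)$ according to where $r$ (equivalently $tr$) falls. \textbf{Case 1: both $tr$ and $r$ are small}, say $\max\{tr, r\} \leq r_1$. Using hyperbolic quasisymmetry on $B_\rho(0,r_1)$ with the monotonicity $\xi'$ is increasing, $L_\rho(0,tr) \leq \xi'(t\cdot 1)\,\ell_\rho(0,r)$ — more carefully, pick points $y$ with $\rho(0,y)=tr$ and $z$ with $\rho(0,z)=r$ achieving $L_\rho(0,tr)$ and $\ell_\rho(0,r)$ respectively; then $\rho(f(0),f(y))/\rho(f(0),f(z)) \leq \xi'(\rho(0,y)/\rho(0,z)) = \xi'(t)$. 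So the ratio is bounded by $\xi'(t)$, depending only on $t,n,K$. \textbf{Case 2: both $tr$ and $r$ are large}, say $\min\{tr,r\} \geq r_1/(1+t)$ or simply $r$ bounded below by a constant $c_0 = c_0(t)$. Then $L_\rho(0,tr) \leq C_1 \max\{(tr)^{1/K}, tr\}$ and $\ell_\rho(0,r) \geq C_2 \min\{r^K, r\} \geq C_2 \min\{c_0^K, c_0\}$, but this is not yet a bound since $tr$ can be arbitrarily large — however $L_\rho(0,tr) \leq C_1 tr$ for $tr \geq 1$ and $\ell_\rho(0,r) \geq C_2 r$ for $r \geq 1$, giving ratio $\leq (C_1/C_2)t$; for $r$ in the bounded intermediate range $[c_0, 1]$ one uses compactness, or just the cruder bounds $L_\rho(0,tr)\leq C_1\max\{(tr)^{1/K},tr\}$ and $\ell_\rho(0,r) \geq C_2\min\{r^K,r\} \geq C_2 \min\{c_0^K,c_0\}$ together with $L_\rho(0,tr) \leq C_1 \max\{(t)^{1/K},t\}$ when $r\le 1$. \textbf{Case 3: the mixed range}, where one of $tr, r$ is small and the other large; since $tr$ and $r$ differ by the fixed factor $t$, this only happens for $r$ in a bounded interval around $r_1$ (roughly $r \in [r_1/t, r_1]$ or $[r_1, r_1 t]$), on which all four quantities $L_\rho(0,tr), \ell_\rho(0,r)$ are bounded above and below by positive constants depending only on $t, n, K$ — via Theorem \ref{thm:qc1} on one side and Theorem \ref{thm:qc2} on the other, or simply by noting $f$ is $\xi'$-quasisymmetric on a slightly larger fixed hyperbolic ball. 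Taking $\eta$ to be the maximum of the bounds obtained in the three cases finishes the proof.

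The main obstacle I anticipate is \textbf{Case 3 and the interface}: bookkeeping the constants so that the ratio $L_\rho(0,tr)/\ell_\rho(0,r)$ — where numerator and denominator are measured at \emph{different} radii $tr$ and $r$ — stays bounded uniformly in $r$, especially in the transition zone where $r$ straddles the threshold $r_1$. The cleanest fix is probably to enlarge the "yolk" in Theorem \ref{thm:qc2} so that hyperbolic quasisymmetry holds on $B_\rho(0, \max\{r_1, 2t r_1\})$ — one still gets $\xi'$ depending only on $n, K$ since $t$ is fixed — thereby absorbing Cases 1 and 3 into a single quasisymmetry estimate, and reserving Theorem \ref{thm:qc1} purely for $r$ (hence $tr$) bounded below by a fixed positive constant, where both bounds are clean power/linear estimates. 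One should also double-check that $\xi'$ (the hyperbolic quasisymmetry modulus) genuinely depends only on $n$ and $K$ and not on the choice of fixed radius, which follows because the bi-Lipschitz constant between $\rho$ and the Euclidean metric on any fixed ball $B(0,c) \Subset \B^n$ is an absolute function of $c$.
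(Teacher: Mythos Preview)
Your approach is correct and essentially the same as the paper's: split at hyperbolic radius $1$, use the egg-yolk quasisymmetry (Theorem~\ref{thm:qc2}) when both $tr,r<1$, and the Gehring--Osgood/Vuorinen bounds (Theorem~\ref{thm:qc1}) otherwise. The paper's case analysis is slightly tidier than yours: instead of isolating a separate ``mixed'' Case~3, it splits the large-scale regime on $t\geq 1$ versus $t\leq 1$ and uses the elementary inequalities $\min\{r^K,r\}\geq r\, t^{1-K}$ (valid when $t\geq 1$ and $tr\geq 1$) and $\max\{tr,(tr)^{1/K}\}\leq t^{1/K} r$ (valid when $t\leq 1$ and $r\geq 1$) to absorb the transition zone directly, arriving at $\eta=\max\{C_3^2\,\xi(C_3^2 t),\ C_1 t^K/C_2,\ C_1 t^{1/K}/C_2\}$ without needing to enlarge the yolk.
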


\begin{proof}
Throughout the proof, we will denote $L_{\rho}(0,tr)$ and $\ell_{\rho}(0,r)$ by $L_{\rho}(tr)$ and $\ell_{\rho}(r)$ respectively.  We will denote by $x$ a point with $\rho(0,x) = tr$ and $\rho(0,f(x)) = L_{\rho}(tr)$ and by $y$ a point with $\rho(0,y) = r$ and $\rho(0,f(y)) = \ell_{\rho}(r)$.

Observe that if $f:\B^n \to \B^n$ is $K$-quasiconformal and fixes $0$, then the image of the ball centred at $0$ of hyperbolic radius $1$ is contained in the ball centred at $0$ of hyperbolic radius $C_1$ by Theorem \ref{thm:qc1}. We may assume that $C_1 \geq 1$. Then if $x,y\in B_{\rho}(0,1)$ it follows that $f(x),f(y) \in B_{\rho}(0,C_1)$. Since the Euclidean and hyperbolic metrics are bi-Lipschitz equivalent on compact subsets of $\B^n$, there exists a constant $C_3$ depending only on $n$ and $K$  so that the Euclidean and hyperbolic metrics are $C_3$-bi-Lipschitz equivalent on $B_{\rho}(0,C_1)$.
Moreover, we can apply Theorem \ref{thm:qc2} to $B(0,\widetilde{C_1}):= B_{\rho}(0,C_1)$ contained in $\B^n$, that is with $\lambda = \widetilde{C_1}$. Thus we may conclude $f$ is $\xi$-quasisymmetric on $B(0,\widetilde{C_1})$, where $\xi$ depends only on $n$ and $K$, since $\widetilde{C_1}$ depends only on $C_1$ which depends only on $n$ and $K$.

Putting all this together, if $x,y\in B_{\rho}(0,1)$, we have
\[ \frac{L_{\rho}(tr)}{\ell_{\rho}(r)} = \frac{\rho(0,f(x)) }{\rho(0,f(y))} \leq C_3^2 \frac{|f(x)|}{|f(y)|}
\leq C_3^2 \xi \left ( \frac{|x|}{|y|} \right ) \leq C_3^2\xi ( C_3^2 t) . \]

We now deal with the cases where at least one of $x,y$ are not in $B_{\rho}(0,1)$. First, suppose $t\geq 1$, so $|x| \geq |y|$, and $\rho(0,x) = tr \geq 1$. Then $r\geq 1/t$ and so
\[ r^K = r^{K-1}r \geq \frac{r}{t^{K-1}}.\]
Consequently,
\[ \min \{ r^K,r\} \geq \min \left \{ \frac{r}{t^{K-1}} ,  r \right \} = \frac{r}{t^{K-1}}.\]
By Theorem \ref{thm:qc1} and since $tr \geq 1$, it follows that
\[  \frac{L_{\rho}(tr)}{\ell_{\rho}(r)} = \frac{\rho(0,f(x)) }{\rho(0,f(y))} 
\leq \frac{C_1 tr}{C_2rt^{1-K}} = \frac{C_1t^K}{C_2}.\]

Second, suppose $t\leq 1$, so $|y| \geq |x|$, and $\rho(0,y) = r \geq 1$ since we have assumed at least one of $x,y$ are not in $B_{\rho}(0,1)$. Then $tr\geq t$ and so
\[ (tr)^{1/K} = (tr)(tr)^{1/K - 1} \leq (tr)t^{1/K-1} = t^{1/K}r.\]
Consequently,
\[ \max \{tr , (tr)^{1/K} \} \leq t^{1/K}r.\]
By Theorem \ref{thm:qc1} and since $r\geq 1$, it follows that
\[ \frac{L_{\rho}(tr)}{\ell_{\rho}(r)} = \frac{\rho(0,f(x)) }{\rho(0,f(y))} \leq \frac{C_1t^{1/K}r}{C_2r} = \frac{C_1t^{1/K}}{C_2}.\]

Combining the above estimates, we see that for any $r>0$,
\[ \frac{ L_{\rho}(rt) }{\ell_{\rho}(r) } \leq \eta:= \max \left \{ C_3^2 \xi( C_3^2t) , \frac{C_1t^K}{C_2} , \frac{C_1t^{1/K}}{C_2}  \right \},\]
and recall that $\xi,C_1,C_2,C_3$ depend only on $n$ and $K$. 
\end{proof}

We may now prove our main result of the section.

\begin{proof}[Proof of Theorem \ref{thm:2}]
Suppose that $x,y,z \in \B^n$ with $\rho(x,y) = t\rho(x,z)$ for some $t>0$. Choose M\"obius mappings $P,Q$ from $\B^n$ onto itself which map $x$ to $0$ and $f(x)$ to $0$ respectively. Denote by $\widetilde{f}$ the map $Q \circ f \circ P^{-1}$. Since M\"obius mappings are hyperbolic isometries, we have by applying Lemma \ref{lem:qcqs1} to $\widetilde{f}$ that there exists a homeomorphism $\eta:(0,\infty)\to(0,\infty)$ such that
\begin{align*}
\frac{ \rho(f(x),f(y)) }{\rho(f(x),f(z)) } &= \frac{ \rho( 0,Q(f(y)))}{ \rho(0,Q(f(z)))} \\
&= \frac{ \rho(0, \widetilde{f}(P(y)) ) }{ \rho ( 0, \widetilde{f}(P(z)))}\\
&\leq \eta \left ( \frac{ \rho ( 0 , P(y) ) }{ \rho(0,P(z) ) } \right ) \\
&= \eta \left ( \frac{ \rho ( x,y) } {\rho( x,z) } \right ).
\end{align*}
This shows that $f$ is quasisymmetric with respect to the hyperbolic metric, with quasisymmetry provided by the homeomorphism $\eta$.
\end{proof}

In the proof of Lemma \ref{lem:qcqs1} we used Theorem \ref{thm:qc2} on scales with hyperbolic distance at most $1$. If instead we had used \cite[Theorem 1.1]{FN} on small  enough scales for it to be applicable, and modified the proof so the cases where $x $ or $y $ are not in $B_\rho (0, 1) $  become the cases where Theorem 1.1  does not apply, we could directly see that we can take $\eta$ to be $\eta(t) = C \max \{ t^K , t^{1/K} \}$, where $C$ is a constant depending only on $n$ and $K$. The proof of Theorem \ref{thm:2} then implies that a quasiconformal map $f:\B^n \to \B^n$ is power quasisymmetric. We finally prove consequences of Theorem \ref{thm:2}.

\begin{proof}[Proof of Corollary \ref{cor:1}]
Let $n\geq 2$ and let $M^n,N^n$ be hyperbolic $n$-manifolds with hyperbolic distance functions $\rho_M,\rho_N$ respectively. If $f:M^n\to N^n$ is $\eta$-quasisymmetric, then it follows from the Metric Definition of quasiconformality, see Definition \ref{def:metric}, that $f$ is quasiconformal since quasiconformality is a local condition. 

On the other hand, suppose that $f:M^n\to N^n$ is $K$-quasiconformal. Writing $\pi_M,\pi_N$ for covering maps from the universal cover $\B^n$ onto $M^n,N^n$ respectively, we can lift $f$ to a $K$-quasiconformal map $\widetilde{f}:\B^n \to \B^n$ satisfying $f\circ \pi_M = \pi_N \circ \widetilde{f}$.

Let $p,q,r$ be three points in $M^n$ and choose $u,v,w\in \B^n$ with $\pi_M(u) = p, \pi_M(v) = q, \pi_M(w)=r$ and, moreover, $\rho_M(p,q) = \rho(u,v)$ and $\rho_M(p,r) = \rho(u,w)$. By Theorem \ref{thm:2}, there exists $\widetilde{\eta}$ depending only on $K$ and $n$ so that
\begin{equation}
\label{eq:cor4eq1}
\frac{ \rho( \widetilde{f}(u) , \widetilde{f}(v) )}{\rho( \widetilde{f}(u) , \widetilde{f}(w) )} \leq \widetilde{\eta} \left ( \frac{ \rho(u,v) }{\rho(u,w)} \right ) = \widetilde{\eta} \left ( \frac{ \rho_M(p,q)}{\rho_M(p,r)} \right ).
\end{equation}
Now, $\pi_N(\widetilde{f}(u)) = f(p), \pi_N(\widetilde{f}(v)) = f(q)$ and $\pi_N(\widetilde{f}(w)) = f(r)$ but we cannot assume that, for example, $\rho_N(f(p) , f(q))$ is realized by $\rho(\widetilde{f}(u) , \widetilde{f}(v) )$. However, we do have
\begin{equation}
\label{eq:cor4eq2} 
\rho_N(f(p) , f(q)) \leq \rho(\widetilde{f}(u) , \widetilde{f}(v) ).
\end{equation}
If $G_M$ is the covering group for the covering map $\pi_M :\B^n \to M^n$, then consider the orbit of $w$ under $G_M$, that is, let $\Lambda = \{g(w) : g\in G_M\}$. Then for any $w' \in \Lambda \setminus  \{ w \}$, we have
\begin{equation*}
\frac{ \rho(\widetilde{f}(u) , \widetilde{f}(w) ) }{\rho(\widetilde{f}(u) , \widetilde{f}(w') ) } \leq \widetilde{\eta} \left ( \frac{ \rho(u,w) }{\rho(u,w') } \right ) \leq \widetilde{\eta}(1),
\end{equation*}
since $\rho(u,w) = \rho_M(p,r)$ and $\widetilde{\eta}$ is increasing. Since $f\circ \pi_M = \pi_N \circ \widetilde{f}$, it follows that $\rho_N(f(p),f(r))$ is realized by the infimum of $\rho(\widetilde{f}(u) , \widetilde{f}(w') )$ as $w'$ ranges over $\Lambda$.
We therefore have
\begin{equation}
\label{eq:cor4eq3} 
\rho_N(f(p) , f(r) ) \geq \frac{ \rho ( \widetilde{f}(u) , \widetilde{f}(w) ) }{\widetilde{\eta}(1) }.
\end{equation}
By combining \eqref{eq:cor4eq1}, \eqref{eq:cor4eq2} and \eqref{eq:cor4eq3}, we conclude that
\[ \frac{ \rho_N(f(p) , f(q)) }{\rho_N(f(p),f(r) )} \leq \frac{ \widetilde{\eta}(1) \rho (\widetilde{f}(u) , \widetilde{f}(v) )}{\rho ( \widetilde{f}(u) , \widetilde{f}(w) )} \leq \widetilde{\eta}(1) \widetilde{\eta} \left ( \frac{ \rho_M(p,q)}{\rho_M(p,r)} \right ).\]
The result now follows by taking the quasisymmetry function $\eta$ to be $\widetilde{\eta}(1)\widetilde{\eta}(t)$.
\end{proof}

\begin{proof}[Proof of Corollary \ref{cor:2}]
If $f:M^n\to N^n$ is quasiconformal, then by Corollary \ref{cor:1} $f$ is $\eta$-quasisymmetric. It follows that $f$ satisfies the skew condition with constant $\eta(1)$.

Conversely, if $f:M^n \to N^n$ satisfies the skew condition with constant $\sigma$, then while we cannot necessarily guarantee the lift $\widetilde{f}$ of $f$ to $\B^n$ does, it does on small enough scales which will be enough to conclude quasiconformality. 

More precisely, if $p\in M^n$, find $u\in \B^n$ and $\delta>0$ so that the covering map $\pi_M$ is an isometry from $B_{\B^n}(u,\delta)$ onto $B_{M^n}(p,\delta)$. Then every equilateral triangle in $B_{M^n}(p,\delta)$ lifts to an equilateral triangle in $B_{\B^n}(u,\delta)$. The proof of Theorem \ref{thm:1} then implies that $\widetilde{f}$ is quasiconformal in $B_{\B^n}(u,\delta)$ with distortion bounded above by a constant depending only on $\sigma$. Hence $f$ is quasiconformal in a neighborhood of $p$ with the same distortion bound. Repeating this argument over all points in $M^n$ proves the claim.
\end{proof}

\begin{proof}[Proof of Corollary \ref{cor:3}]
This is immediate from Corollary \ref{cor:1}, since $\eta$ only depends on $K$ and $n$. 
\end{proof}

\end{document}